	 \definecolor{darkred}{rgb}{0.75,0,0}
	 \definecolor{darkgreen}{rgb}{0,0.5,0}
	 \definecolor{darkblue}{rgb}{0,0,0.75}
  	 \definecolor{darkorange}{rgb}{1,0.9,0.1}
	 \definecolor{dark}{rgb}{0,0,0}
\newtheorem{theorem}{Theorem}
\newtheorem{definition}{Definition}
\newtheorem{remark}{Remark}
\newtheorem{proof}{Proof}
\newtheorem{lemma}{Lemma}
\newtheorem{corollary}{Corollary}
\newtheorem{assumption}{Assumption}
\newtheorem{example}{Example}
\begin{document}

\preprint{APS/123-QED}

\title{Convergence rate of opinion dynamics with complex interaction types}

\author{Lingling Yao}
\affiliation{%
Center for Systems and Control, College of Engineering, Peking University, Beijing 100871, China}
\author{Aming Li}
\thanks{Corresponding author: amingli@pku.edu.cn}
\affiliation{%
Center for Systems and Control, College of Engineering, Peking University, Beijing 100871, China}
\affiliation{
Center for Multi-Agent Research, Institute for Artificial Intelligence, Peking University, Beijing 100871, China}




\date{\today}

\begin{abstract}
The convergence rate is a crucial issue in opinion dynamics, which characterizes how quickly opinions reach a consensus and tells when the collective behavior can be formed. However, the key factors that determine the convergence rate of opinions are elusive, especially when individuals interact with complex interaction types such as friend/foe, ally/adversary, or trust/mistrust. In this paper, using random matrix theory and low-rank perturbation theory, we present a new body of theory to comprehensively study the convergence rate of opinion dynamics. First, we divide the complex interaction types into five typical scenarios: mutual trust $(+/+)$, mutual mistrust $(-/-)$, trust$/$mistrust $(+/-)$, unilateral trust $(+/0)$, and unilateral mistrust $(-/0)$. For diverse interaction types, we derive the mathematical expression of the convergence rate, and further establish the direct connection between the convergence rate and population size, the density of interactions (network connectivity), and individuals' self-confidence level. Second, taking advantage of these connections, we prove that for the $(+/+)$, $(+/-)$, $(+/0)$, and random mixture of different interaction types, the convergence rate is proportional to the population size and network connectivity, while it is inversely proportional to the individuals' self-confidence level. However, for the $(-/-)$ and $(-/0)$ scenarios, we draw the exact opposite conclusions. Third, for the $(+/+,-/-)$ and $(-/-,-/0)$ scenarios, we derive the optimal proportion of different interaction types to ensure the fast convergence of opinions. Finally, simulation examples are provided to illustrate the effectiveness and robustness of our theoretical findings.
\end{abstract}

\maketitle

\section{Introduction}
Over the last few years, the investigation of reaching a convergence among a group of agents has attracted remarkable attention from many fields, such as control theory \cite{Cao, Ols,Xu, Ang, Ji, Hong, Li}, ecology \cite{Rey, Vic}, sociology \cite{Pros17, Pros172, Xia:20, Wang22}. To understand and analyze the underlying reasons for such limiting group behavior, several mathematical models have been proposed from the perspective of opinion dynamics, including the DeGroot, the Abelson, the Friedkin-Johnsen, the bounded confidence, and the Altafini models \cite{De:74, Fri99, Altafini18, Tia18}. The seminal discrete-time DeGroot model assumes that each individual's opinion at the next time step is a weighted average of his/her current opinion and those of his/her neighbors. Based on some properties of infinite products of stochastic matrices, it is further proved that individuals' opinions can achieve consensus in the sense that all individuals agree
upon certain quantities of interest under the DeGroot model \cite{Wan18}.
Since the DeGroot model reflects the fundamental human cognitive capability of taking convex combinations when integrating related information, it has been extensively studied from various perspectives, including belief system theory \cite{Fri16, Yang:23}, social power theory \cite{Chen, Tia22}, and pluralistic ignorance theory \cite{Ye19, Liu23}.

In social networks, competition, antagonism, and mistrust between individuals and their groups are ubiquitous in many antagonistic systems describing bimodal coalitions, like two-party political systems, duopolistic markets, rival business cartels and competing international alliances. To deal with such situations, the signed networks is introduced, where the signs indicate the social relationships between each individual and his/her neighbors --- a positive sign and a negative sign represent trust (or friendship) and mistrust (or antagonism), respectively \cite{Altafini:13, Cartwright}. The structural balance theory offers a vital analytical tool for examining opinion dynamics on signed networks, forging a link between these networks and their corresponding unsigned networks (all weights in this network are non-negative). It depicts the balance between trust and mistrust that dictates individuals' opinions to become
closer or further apart, respectively \cite{Cartwright}. Based on the structural balance theory, an important opinion dynamics model proposed by Altafini has attracted increasing attention lately \cite{Altafini:13}. Different from the consensus of opinions for all individuals under the DeGroot model, it demonstrates that the agents can achieve a form of ``bipartite consensus'', where all agents converge to a value which is the same for all in modulus but not in sign \cite{Altafini:13}. Additionally, the discrete-time counterpart Altafini model has been extensively studied in \cite{Xia:16, Liu17}. At present, the Altafini model and its extensions have been considerably investigated and some insightful results have been derived based on the Altafini model with features such as switching network, time-varying network, quasi-structurally balanced network \cite{Ame:17, C:20, Prosk:15, Meng:16, Meng:17, Shi, Liu19, Liu21}.

The convergence rate is a fundamental indicator to evaluate the system performance, which provides many meaningful instructions for decision-making and engineering practice \cite{Ding:22, Gha14, Ned19}.
For example, in the decision-making process, an emergent group opinion or consensus often has to be made within a short finite time. In the engineering field, accelerated algorithms can increase efficiency, reduce energy consumption, and optimize performance. Therefore, some researchers have shown particular interest in it and made some remarkable progress. It is pointed out that the convergence speed of the first-order continuous-time system is generally governed by the minimum non-zero eigenvalue (also termed as ``algebraic connectivity'') of the corresponding Laplacian matrix for undirected communication graph \cite{Olfati07}, while the minimum real part of the non-zero eigenvalue of the Laplacian matrix for directed communication graph \cite{Yu10}.

The existing literature mainly deals with accelerated convergence from two points of view. One is to identify the optimal network topology limited by the structured constraints to maximize the convergence rate for a given protocol (e.g., optimizing the weight matrix \cite{Xiao, Apers}). It is shown that if the network topology is symmetric, the problem of finding the fastest converging linear iteration can be cast as a semidefinite programming problem in \cite{Xiao}. For the directed acyclic graphs, the convergence rate can be enhanced by adding some certain edges, as suggested in \cite{Zhang17}. The other one is to seek the optimal protocol to improve the convergence rate for a given network topology (e.g., utilizing finite-time control \cite{Wang10, Xiao14}, employing the graph signal processing \cite{Yi20}, applying the Laplacian matrix-valued functions method \cite{Qin, Ma23} and introducing the individual's memory \cite{Mariano, Yi}). Please refer to \cite{Zhang14, Wang20} for more accelerated algorithms.

The aforementioned works have provided invaluable insights and effective solutions for the accelerated convergence problem under their respective assumptions and formulations. However, the influence that various communication networks have on the convergence rate remains unclear, especially in terms of the complex interaction types. While solving characteristic equations offers a numerical method for studying the consensus rate, this approach becomes increasingly challenging for large-scale populations due to the high time and space complexity. Hence, the study on the spectrum of the adjacency matrix or Laplacian matrix corresponding to the signed network is still a significant theoretical challenge. Most importantly, there is a lack of understanding of the explicit mechanism for the effect of graph variation on convergence rate. As a result, a systematic exploration of the opinion consensus rate for large-scale populations is still lacking.

Motivated by previous discussions, we aim to develop a framework for analyzing the convergence rate of opinion dynamics with complex interaction types. To achieve this goal, the following three key problems must be addressed: a). What are the key factors that affect the convergence rate?
b). How can we establish the quantitative relationship between the convergence rate and these factors? c). What are the specific impacts of these factors on the convergence rate? The main contributions of this article are summarized as follows.
\begin{itemize}
\item  According to the trust and mistrust relationships between individuals, the signed interaction types are categorized into five scenarios: $(+/+)$, $(-/-)$, $(+/-)$, $(+/0)$, $(-/0)$. Subsequently, two random signed networks are constructed in Subsections \ref{subsec1} and \ref{subsec2}. Furthermore, in Subsection \ref{subsec3}, a novel opinion dynamics model on these signed networks is proposed, which can be viewed as a generalized version of the Altafini model with large-scale populations.

\item By structural balance theory, we find that the convergence rate of the generalized Altafini model is governed by $\rho(W)$ or $\rho_{2}(W)$ defined in (\ref{equ_r1}) (see Lemma \ref{lem7}). With the aid of random matrix theory and low-rank perturbation theory, we present the quantitative convergence rate via the estimation of eigenvalues, thereby establishing a direct connection between the convergence rate and
    some key factors in Theorems \ref{the1} and \ref{the2}.
  \item To further tackle the convergence rate issue of our model, we derive the monotonicity of convergence rate $r$ with respect to the population size $n$, network connectivity $P$, and individuals' self-confidence level $d$ in Corollary \ref{cor1} and Theorem \ref{the3}, where for $(-/-)$ and $(-/0)$ scenarios, it behaves completely opposite to the other scenarios.

 \item  The effects of $(-/-)$ and $(-/0)$ on the convergence rate are further considered by analyzing the $(+/+,-/-)$ and $(-/-,-/0)$ scenarios. For the $(+/+,-/-)$ scenario, when two interaction types have approximately the same proportion, the system achieves the fastest convergence. For the $(-/-,-/0)$ scenario, the convergence rate is inversely proportional to the proportion of $(-/-)$ interaction type (see Theorems \ref{the4} and \ref{the5}).
\end{itemize}

The rest of this paper is organized as follows. Section \ref{pre1} offers some useful preliminary knowledge of signed graphs and the construction of networks with diverse types considered in this paper. In Section \ref{pro1}, we introduce the opinion dynamics model with large-scale populations along with some useful lemmas. Moreover, some convergence analyses are given to lay the groundwork for the following research on convergence rate. In Section \ref{main1}, we present our main theoretical results. The theoretical results are verified by numerical simulations in Section \ref{main2}. Finally, Section \ref{con1} concludes this paper. The notations and abbreviations used in this paper are listed in TABLE \ref{tab1}.
\begin{table}[ht]\label{tab1}
\centering
\caption{Notations}
\begin{tabular}{p{2cm}p{6.3cm}}
\hline Symbols & Definitions \\
\hline
$\mathbb{C}$ & set of complex numbers \\
$\mathbb{R}$ & set of real numbers \\
$\mathbb{Z}$ & set of integers \\
$\mathbb{R}^{m\times n}$ & set of $m\times n$ real matrices \\
$O$ & any zero matrix with proper dimension \\
$\mathbf{1}_{n}$ & $[1,\dots,1]^T$ \\
$\text{diag}\{a\}$ & diagonal matrix with diagonal element $a$ \\
$A_{ij}$ & entry at the $i$-th row and the $j$-th column of matrix $A$ \\
$|A|$ & a nonnegative matrix in which each element $|A|_{ij}$ equals $|A_{ij}|$\\
$|A|_{i}$ & the sum of the elements in the $i$-th row of matrix $|A|$ \\
$\lambda_{i}(A)$ & the $i$-th eigenvalue of matrix $A$ \\
$\text{Re}(\lambda_{i}(A))$ & real part of $\lambda_{i}(A)$ \\
$\rho(A)$ & spectral radius of matrix $A$ \\
$[n]$ & set of $1,2,\cdots,n$ \\
$\text{fix}(x)$ & $\text{fix}(x)$ rounds $x$ to the nearest integer toward zero \\
$a:b:c$ & a sequence $a,a+b,a+2b,\cdots,a+\text{fix}((c-a)/b)*b$
 \\
\hline
\end{tabular}
\end{table}
\section{Preliminaries}\label{pre1}
In this section, we briefly review some basic concepts of graph theory used in later sections. Then, we present the construction method for two signed interaction networks and formulate the opinion dynamics model on these networks.
\subsection{Signed Graph}
 Let $\mathcal{G}(W)=\{\mathcal{V},\mathcal{E},W\}$ denote a weighted signed graph of order $n$, with the nodes set $\mathcal{V}=\{v_{1},\dots, v_{n}\}$, the edges set $\mathcal{E}=\mathcal{V}\times \mathcal{V}$, and the adjacency matrix $W=[W_{ij}]$. An edge $e_{ij}=(v_{i}, v_{j})\in\mathcal{E}$ means that node $j$ can get information from node $i$. $W_{ij}\neq0\Leftrightarrow\left(v_j, v_i\right) \in \mathcal{E}$. $W_{ij}\neq 0$ if and only if $e_{ji}\in\mathcal{E}$.  A signed digraph $\mathcal{G}(W)$ is structurally balanced if there exists a bipartition $\left\{\mathcal{V}_1, \mathcal{V}_2\right\}$ of the vertices, where $\mathcal{V}_1 \cup \mathcal{V}_2=\mathcal{V}$ and $\mathcal{V}_1 \cap \mathcal{V}_2=\varnothing$, such that $W_{i j} \geq 0$ for $\forall v_i, v_j \in \mathcal{V}_l$ $(l \in\{1,2\})$ and $W_{i j} \leq 0$ for $\forall v_i \in \mathcal{V}_l, v_j \in$ $\mathcal{V}_q, l \neq q$ $(l, q \in\{1,2\})$; and $\mathcal{G}$ is structurally unbalanced otherwise\cite{Altafini:13}. A walk of length $k$ from node $i$ to $j$ is a sequence of nodes $i_{0},\dots, i_{k}\in\mathcal{V}$, where $i_{0}=i$, $i_{k}=j$. Especially, a walk from a node to itself is a cycle. A signed directed cycle with an even/odd number of edges having negative weights is called a positive/negative directed cycle. Agent $j$ is a reachable node of agent $i$ if there exists a walk from agent $i$ to agent $j$. A graph is periodic if it has at least one cycle and the length of any cycle is divided by some integer $h>1$. Otherwise, a graph is called aperiodic. A strongly connected subgraph $\mathcal{G}^{'}$ of digraph $\mathcal{G}$ is called a strongly connected component (SCC) if it is not contained by any larger strongly connected subgraph. A SCC without incoming arcs from other SCCs is called a closed SCC (CSCC); otherwise, it is called an open SCC (OSCC).

In the following subsections \ref{subsec1} and \ref{subsec2}, we will outline the construction method for two types of signed interaction networks, which lays an important foundation for our subsequent research.
\subsection{Random mixture interactions}\label{subsec1}
  For random mixture interactions of various interaction types, we construct the interaction network with $n$ individuals in the following way:
   \\
   i) individuals $i$ and $j$ interact with probability $P\neq0$;
   \\
   ii) the interaction strength $S_{ij}$ and $S_{ji}$ take the value of a random variable $Z$ with mean 0 and variance $\sigma^2$ independently.

   $S_{i j}<0$ ($S_{i j}>0$) represents the mistrust/trust that individual $j$ has for individual $i$, and $S_{i j}=0$ denotes that the interaction strength from individual $j$ to individual $i$ is zero.

  For simplicity, we refer to the interaction network above as the random mixture interaction network. On the basis of the construction method of the interaction network, we obtain some statistics of the interaction matrix $S$. Specifically,
  \[
  \left\{\begin{array}{l}
\mathbb{E}\left(S_{ij}\right)=P\mathbb{E}\left(Z\right)=0, \\
\mathbb{E}\left(|S_{i j}|\right)=P\mathbb{E}\left(|Z|\right),\\
\mathbb{E}\left(S_{ij}^{2}\right)=P\mathbb{E}\left(Z^{2}\right)=P\sigma^2, \\
\operatorname{Var}\left(S_{i j}\right)=\mathbb{E}\left(S_{ij}^{2}\right)-\mathbb{E}^{2}\left(S_{ij}\right)=P\sigma^2.
\end{array}\right.
\]
For large $n$, since $|S_{ij}|$ is i.i.d. chosen from the distribution of $|Z|$, the $i$-th row sum $|S|_{i}$ of matrix $|S|$ is roughly a constant
\begin{equation}\label{equ_30}\sum\limits_{j=1}^n |S_{i j}| \approx(n-1) \mathbb{E}\left(|S_{i j}|\right)=(n-1)P\mathbb{E}(|Z|).\end{equation}

In the random mixture network, we consider the density of interactions and the interaction strength of different interaction types. However, we cannot distinguish mistrust and trust interactions from the above network and systematically determine the effect of interaction types on the convergence rate. This motivates us to further construct an interaction network with a certain proportion of five interaction types as detailed in Subsection \ref{subsec2}.
  \subsection{Complex mixture interactions}\label{subsec2}
For the mixture interactions under a certain proportion of five interaction types, we construct the interaction network with $n$ individuals in the following way: i) individuals $i$ and $j$ interact with probability $P\neq0$; ii) the interaction strengths are categorized into five typical scenarios (see FIG. \ref{pic-fuhao}):
 \\
 (1) Mutual trust $(+/+)$ interaction with proportion $P_{+/+}$. The interaction strengths $S_{ij}$ and $S_{ji}$ take the values of $|Z|$  independently.
 \\
 (2) Mutual mistrust $(-/-)$ interaction  with proportion $P_{-/-}$. The interaction strengths $S_{ij}$ and $S_{ji}$ take the values of $-|Z|$  independently.
 \\
 (3) Trust$/$mistrust $(+/-)$ interaction with proportion $P_{+/-}$. The interaction strengths $S_{ij}$ and $S_{ji}$ have opposite signs: one takes the value of $|Z|$ while the other takes the value of  $-|Z|$.
 \\
 (4) Unilateral trust $(+/0)$ interaction with proportion $P_{+/0}$. One of the interaction strengths $S_{ij}$ and $S_{ji}$ takes the value of $|Z|$ while the other takes the value of 0.
 \\
 (5) Unilateral mistrust $(-/0)$ interaction with proportion $P_{-/0}$. One of the interaction strengths $S_{ij}$ and $S_{ji}$ takes the value of $-|Z|$ while the other takes the value of 0.
\begin{figure}
    \centering
   \includegraphics[scale=0.5]{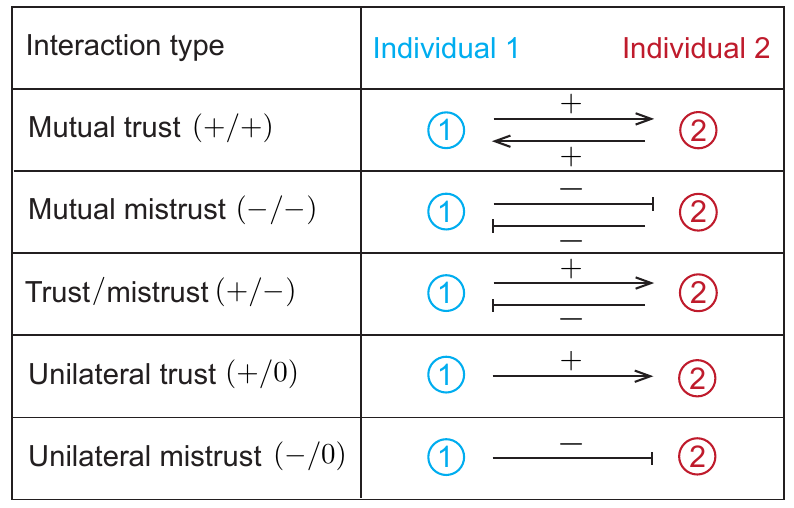}
    \caption{Five interaction types between individuals 1 and 2: mutual trust $(+/+)$, mutual mistrust $(-/-)$, trust$/$mistrust $(+/-)$, unilateral trust $(+/0)$, unilateral mistrust $(-/0)$.}
    \label{pic-fuhao}
    \end{figure}

 For simplicity, we refer to the interaction network above as the complex mixture interaction network. Then, we can obtain some statistics of the interaction matrix $S$. Specifically, we have
 \begin{equation*}\left\{\begin{array}{l}
\mathbb{E}\left(S_{ij}\right)=P\bar{P}\mathbb{E}\left(|Z|\right), \\
\mathbb{E}\left(|S_{i j}|\right)=P\hat{P}\mathbb{E}\left(|Z|\right),\\
\mathbb{E}\left(S_{ij}^{2}\right)=\displaystyle P\hat{P}\mathbb{E}\left(|Z|^{2}\right)=P\hat{P}\sigma^2, \\
\mathbb{E}\left(S_{i j}S_{j i}\right)=PP^{*}\mathbb{E}^{2}\left(|Z|\right),
\end{array}\right. \end{equation*}
where \begin{equation}\label{equ_28}\hat{P}=P_{+/+}+P_{+/-}+P_{-/-}+\frac{1}{2}P_{+/0}+\frac{1}{2}P_{-/0},\end{equation} \begin{equation}\label{equ_29}\bar{P}=P_{+/+}-P_{-/-}+\frac{1}{2}P_{+/0}-\frac{1}{2}P_{-/0},\end{equation}
and \begin{equation}\label{equ_299}P^{*}=P_{+/+}+P_{-/-}-P_{+/-},\end{equation}
where $0\leq\hat{P}\leq1$, $-1\leq\bar{P}\leq1$ and $\bar{P}\leq\hat{P}$.

Similar to random mixture interactions, for large $n$, the $i$-th row sum $|S|_{i}$ of matrix $|S|$ approaches to a constant
\begin{align}\label{equ_31}\sum\limits_{j=1}^n |S_{i j}|&\approx(n-1) \mathbb{E}\left(|S_{i j}|\right)=(n-1)P\hat{P}\mathbb{E}(|Z|).\end{align}
In order to clarity the $i$-th row sum $|S|_{i}$ for random mixture interactions and that for complex mixture interactions, we denote them as $C_{\text{r}}$ and $C_{\text{m}}$, respectively.

Next, we formulate the opinion dynamics model on these networks presented in Subsections \ref{subsec1} and \ref{subsec2}.
\subsection{Model formulation}\label{subsec3}
  Consider a social network composed of $n$ individuals discussing one topic simultaneously. The opinion of individual $i$ at time $k$ is represented by $X_i(k)\in[-1,1]$. $X_i(k)>0$ $(X_i(k)<0)$ denotes the support (rejection) of individual $i$, and $X_i(k)=0$ represents a neutral attitude. The magnitude of $X_i(k)$ indicates the strength of attitude, where $\left|X_i(k)\right|=1$ represents the maximal support or rejection.

Motivated by the Altafini model proposed in \cite{Altafini:13}, suppose the opinion of the $i$-th individual evolves as
  \begin{equation}\label{equ1}X_i(k+1)=\sum_{j=1}^n W_{i j}X_j(k),\end{equation}
   where
   \[
   W_{i j}=\left\{\begin{array}{l}
  \displaystyle\frac{S_{ij}}{d+|S_{i}|},  \text{if} \ i\neq j,\\
   \displaystyle\frac{d}{d+|S_{i}|},  \text{otherwise}.\\
   \end{array}\right.
   \]
 $W_{i j}\in[-1,1]$ represents the weighted average influence that individual $j$ has on individual $i$, and $d>0$ denotes the self-confidence level. Furthermore, let $X(k+1)=[X_1(k+1),\cdots,X_n(k+1)]^{T}$, then the system (\ref{equ1}) can be written as
   \begin{equation}\label{equ001}X(k+1)=WX(k).\end{equation}

\begin{remark}\textup{
By equations (\ref{equ_30}) and (\ref{equ_31}), the $i$-th row sum of matrix $|S|$ is roughly a constant that does not depend on individual $i$. Therefore, for random mixture interactions and complex mixture interactions, the equality $W_{11}=W_{22}=\cdots=W_{nn}$ always holds in system (\ref{equ001}).}
\end{remark}

\begin{remark}
\textup{According to equation (\ref{equ1}), we have $\sum_{j=1}^{n}|W_{ij}|=1$ for large $n$. Thus, if $X_{i}(0) \in [-1,1]$, we have $X_{i}(k) \in [-1,1]$, $\forall i\in[n]$. In summary, system (\ref{equ001}) can be regarded as a generalized discrete-time Altafini model when $n$ is sufficiently large.}
\end{remark}

\begin{definition}(Convergence and consensus)
For large $n$, system (\ref{equ001}) is said to converge if $\forall X(0)\in\mathbb{R}$, the limit $\lim\limits_{k \to +\infty}X(k)$ exists. Moreover, it admits consensus if $\lim\limits_{k \rightarrow \infty}\left|X_i(k)-X_j(k)\right|=0$. If $\lim\limits_{k \rightarrow \infty}\left|X_i(k)\right|=\alpha>0$, it is said to reach the bipartite consensus, $\alpha\in[-1,1]$, $\forall i, j\in[n]$. If $\lim\limits_{k \rightarrow \infty}X_i(k)=0,$ it is stable.
\end{definition}

\subsection{Some basic Lemmas}

\begin{lemma}\label{lem1} \textup{(See Better Theorem in \cite{Horn:85}) Let $\lambda\in\mathbb{R}$ and $x\in\mathbb{R}^{n}$ be an eigenpair of matrix $A\in\mathbb{R}^{n\times n}$ such that $\lambda$ satisfies the following inequalities \[
\left|\lambda-a_{i i}\right| \geq R_{i}^{\prime}=\sum\limits_{j \neq i}\left|a_{i j}\right|, \ \forall i=1, \ldots, n.
 \] If digraph $\mathcal{G}(A)$ is strongly connected, then every Gersgorin Circle passes through $\lambda$.}
\end{lemma}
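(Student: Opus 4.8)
The plan is to follow the classical irreducibility refinement of the Gersgorin circle theorem (the ``Taussky-type'' argument). Write the eigenpair as $Ax=\lambda x$ with $x=(x_{1},\dots,x_{n})^{T}\neq 0$, and choose an index $p$ with $|x_{p}|=\max_{i}|x_{i}|>0$. Reading off the $p$-th row of the eigenvalue equation, $(\lambda-a_{pp})x_{p}=\sum_{j\neq p}a_{pj}x_{j}$, and estimating,
\[
|\lambda-a_{pp}|\,|x_{p}| \;\leq\; \sum_{j\neq p}|a_{pj}|\,|x_{j}| \;\leq\; |x_{p}|\sum_{j\neq p}|a_{pj}| \;=\; |x_{p}|\,R_{p}^{\prime},
\]
so that $|\lambda-a_{pp}|\leq R_{p}^{\prime}$. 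Combining this with the standing hypothesis $|\lambda-a_{pp}|\geq R_{p}^{\prime}$ forces $|\lambda-a_{pp}|=R_{p}^{\prime}$, i.e. the $p$-th Gersgorin circle passes through $\lambda$, and it simultaneously forces both inequalities in the display to be equalities.

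Next I would exploit the equality in the second inequality. Since $\sum_{j\neq p}|a_{pj}|\bigl(|x_{p}|-|x_{j}|\bigr)=0$ is a sum of nonnegative terms, each term vanishes, so $|x_{j}|=|x_{p}|$ for every $j$ with $a_{pj}\neq 0$, that is, for every in-neighbour of $p$ in $\mathcal{G}(A)$. Now I invoke strong connectivity: every vertex admits a walk terminating at $p$, and iterating the previous observation along such a walk (at each step the newly reached vertex again attains the maximal modulus, so the argument applies to its row in turn) yields $|x_{q}|=|x_{p}|$ for all $q\in[n]$. Hence $|x|$ has constant modulus and every component of $x$ is a maximizer.

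Finally, I would repeat the first step at an arbitrary index $i$: because $|x_{i}|=|x_{p}|>0$ and $|x_{j}|\leq|x_{i}|$ for all $j$, the same chain of inequalities gives $|\lambda-a_{ii}|\,|x_{i}|\leq\sum_{j\neq i}|a_{ij}|\,|x_{j}|\leq|x_{i}|\,R_{i}^{\prime}$, hence $|\lambda-a_{ii}|\leq R_{i}^{\prime}$, and together with the hypothesis $|\lambda-a_{ii}|\geq R_{i}^{\prime}$ we obtain $|\lambda-a_{ii}|=R_{i}^{\prime}$ for every $i$. Therefore every Gersgorin circle passes through $\lambda$.

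The content here is essentially routine triangle-inequality bookkeeping; the only point needing care is the propagation step. One must respect the paper's edge orientation (recall $a_{pj}\neq 0 \Leftrightarrow (v_{j},v_{p})\in\mathcal{E}$), so the modulus equality at a vertex transfers only ``backwards'' along the nonzero entries of its row; accordingly strong connectivity must be used in the form that every vertex can reach $p$, which guarantees the chain of equalities indeed covers all of $[n]$. (The hypothesis $\lambda\in\mathbb{R}$, $x\in\mathbb{R}^{n}$ plays no role beyond notation; the argument is identical over $\mathbb{C}$.)
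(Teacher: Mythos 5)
Your proof is correct and complete; the paper itself offers no proof of this lemma, citing Horn and Johnson instead, and your argument is exactly the standard Taussky-type irreducibility refinement of the Gersgorin theorem given in that reference (maximal-modulus eigenvector component, forced equalities, propagation of the equal-modulus property through strong connectivity, then equality in every row). You also handled the one genuinely delicate point correctly: with the paper's convention $a_{pj}\neq 0 \Leftrightarrow (v_j,v_p)\in\mathcal{E}$, the equal-modulus property spreads backwards along edges, so strong connectivity must be invoked as ``every vertex can reach $p$,'' which is precisely what you did.
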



According to Girko's Circular law in \cite{Girko:85} and Ellipse law in \cite{Sommer:88,Tao:10,All:12}, Lemma \ref{lem3} and Lemma \ref{lem4} are given as below:

\begin{lemma}\label{lem3} \textup{(See Theorem 1.10 (Circular law) in \cite{Tao:10}) Let matrix $W$ be the $n\times n$ random matrix whose entries $W_{ij}$ are i.i.d. random variables with mean zero and variance $\frac{1}{n}$. Then, when $n$ goes to infinity, the spectral distribution of $W$ converges (both in probability and the almost sure sense) to the uniform distribution on the unit disk.} \end{lemma}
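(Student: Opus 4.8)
The statement is the classical circular law, so what I would present is the Hermitization argument of Girko, made rigorous by Bai and in the stated generality by Tao and Vu. The organizing principle is that an empirical spectral distribution $\mu_n=\tfrac1n\sum_{i=1}^n\delta_{\lambda_i(W)}$ on $\mathbb{C}$ is recovered from its logarithmic potential $U_{\mu_n}(z)=\int_{\mathbb{C}}\log|z-w|\,d\mu_n(w)=\tfrac1n\log|\det(W-zI)|$, and that weak convergence $\mu_n\to\mu$ follows once one has (a) convergence $U_{\mu_n}(z)\to U_\mu(z)$ for Lebesgue-a.e.\ $z$, and (b) a uniform integrability control near the singularities of $\log$. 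Accordingly the plan is: fix $z\in\mathbb{C}$ and write $U_{\mu_n}(z)=\tfrac1{2n}\log\det\!\big((W-zI)(W-zI)^{*}\big)=\int_0^\infty\log\sqrt{x}\,d\nu_n^{z}(x)$, where $\nu_n^{z}$ is the empirical distribution of the squared singular values of $W-zI$; prove that $\nu_n^{z}$ has an explicit deterministic limit $\nu^{z}$; push $\log$ through the limit; and check that the resulting $U_\mu$ is the log-potential of the uniform measure on the unit disk, which then pins down $\mu$ by unicity of log-potentials.

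For the limit of $\nu_n^{z}$ I would use Girko's symmetrization $H_z=\begin{pmatrix}0 & W-zI\\ (W-zI)^{*} & 0\end{pmatrix}$, whose spectrum is $\{\pm s_j(W-zI)\}$, so that the singular-value statistics of $W-zI$ are the eigenvalue statistics of a Hermitian random matrix with a deterministic shift. After the standard reductions (truncating the entries at a small power of $n$ and re-centering, which perturb the relevant distributions negligibly), the Stieltjes transform $m_n^{z}$ of $\nu_n^{z}$ satisfies a self-consistent equation whose unique solution $m^{z}$ identifies the limiting law $\nu^{z}$; this is the Dozier--Silverstein/Bai--Silverstein deformed-Wigner machinery. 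Taking $U_\mu(z)=\int\log\sqrt{x}\,d\nu^{z}(x)$ and computing $\tfrac1{2\pi}\Delta_z U_\mu$ then yields the density $\tfrac1\pi\mathbf{1}_{\{|z|<1\}}$, i.e.\ $\mu$ is the uniform probability measure on the unit disk; this last step is by now a routine computation.

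The only genuinely hard input is the uniform integrability needed in (b): since $\log x\to-\infty$ as $x\to0^{+}$, weak convergence of $\nu_n^{z}$ does not by itself give $\int\log\sqrt{x}\,d\nu_n^{z}\to\int\log\sqrt{x}\,d\nu^{z}$. One must show that the smallest singular value is not too small, $s_n(W-zI)\ge n^{-C}$ with probability $1-o(1)$ for some constant $C$, together with an upper bound on the number of singular values lying in a window $[0,n^{-\delta}]$. The least-singular-value bound is the Rudelson--Vershynin/Tao--Vu inequality, established through Littlewood--Offord--type anti-concentration (the vector $(W-zI)v$ is spread out unless $v$ is close to a structured, hence rare, set); I expect this to be the main obstacle. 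Granting it, $U_{\mu_n}(z)$ converges in probability for a.e.\ fixed $z$; a Fubini argument over $z\in\mathbb{C}$ then gives weak convergence $\mu_n\to\mu$ in probability, and a Borel--Cantelli argument using concentration of $U_{\mu_n}(z)$ about its mean (e.g.\ via Talagrand-type inequalities for Lipschitz functionals of the entries) upgrades it to almost sure convergence, which is the assertion of the lemma.
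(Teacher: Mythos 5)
There is nothing in the paper to compare against here: Lemma \ref{lem3} is quoted verbatim as Theorem 1.10 of \cite{Tao:10} and the paper offers no proof of it, treating the circular law as an external input. Your outline is the standard Girko--Bai--Tao--Vu hermitization argument that underlies that citation, and it is correctly structured: recover $\mu_n$ from its logarithmic potential $\tfrac1n\log|\det(W-zI)|$, pass to the singular-value distribution $\nu_n^{z}$ of $W-zI$ via the Hermitian dilation $H_z$, identify its limit $\nu^{z}$ through the self-consistent Stieltjes-transform equation, and then take $\tfrac1{2\pi}\Delta_z$ of the limiting potential to land on the density $\tfrac1\pi\mathbf{1}_{\{|z|<1\}}$. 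You also correctly isolate the genuinely hard point, namely that weak convergence of $\nu_n^{z}$ does not control $\int\log\sqrt{x}\,d\nu_n^{z}$ near $0$, so one needs the polynomial lower bound $s_n(W-zI)\ge n^{-C}$ from inverse Littlewood--Offord/least-singular-value theory together with control of the moderately small singular values (in the literature this is usually phrased as $s_{n-j}(W-zI)\gtrsim j/n$ for $j$ above a small polynomial threshold, which is slightly sharper than a mere count in a window $[0,n^{-\delta}]$, and it is needed because at the stated generality only a second moment is assumed, so truncation and re-centering must be done carefully). These ingredients, as well as the concentration estimate used to upgrade convergence in probability to almost sure convergence, are deferred to cited results rather than proved, so what you have is an accurate roadmap of the known proof rather than a self-contained argument; given that the paper itself supplies no proof and simply cites \cite{Tao:10}, that level of detail is appropriate, and I find no step in your outline that would fail.
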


\begin{lemma}\textup{(See \cite{All:12})\label{lem4} Let matrix $W$ be the $n\times n$ random matrix whose entries $W_{ij}$ are random variables with mean zero and variance $\frac{1}{n}$. The asymmetric entries of random matrix $W$ are i.i.d, and symmetric entries obey mean $\frac{z}{n}$. Then, when $n \rightarrow \infty$, the spectral distribution of $W$ converges to the uniform distribution on the complex plane centered at the origin, whose horizontal half-axis length is $1+z$ and the vertical half-axis length is $1-z$, i.e.,
\[
{\left(\frac{x}{1+z}\right)}^{2}+{\left(\frac{y}{1-z}\right)}^{2}\leq 1.
\]}
 \end{lemma}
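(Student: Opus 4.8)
To prove Lemma~\ref{lem4}, I would follow the Hermitization / logarithmic-potential route that underlies every rigorous proof of circular-type laws, keeping track of the extra parameter $z=n\,\mathbb{E}[W_{ij}W_{ji}]$ that measures the correlation between transposed pairs; at $z=0$ the statement reduces to the circular law of Lemma~\ref{lem3}. Because $W$ is non-Hermitian, its empirical spectral distribution $\mu_W=\frac1n\sum_{i}\delta_{\lambda_i(W)}$ is not accessible through a single resolvent; instead, for each $\zeta\in\mathbb{C}$ one studies the Hermitized matrix $H_\zeta=\begin{pmatrix}0 & W-\zeta I_n\\ (W-\zeta I_n)^{\ast} & 0\end{pmatrix}$, whose spectrum is the symmetrized set of singular values $\{\pm s_i(\zeta)\}$ of $W-\zeta I_n$. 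The pivot is the identity $-\int\log|\lambda-\zeta|\,d\mu_W(\lambda)=-\frac1n\log\bigl|\det(W-\zeta I_n)\bigr|=-\frac1n\sum_i\log s_i(\zeta)$: the logarithmic potential of $\mu_W$ is read off from the singular-value statistics of $W-\zeta I_n$. So the proof splits into (i) showing the symmetrized singular-value distribution of $W-\zeta I_n$ has an almost-sure deterministic limit $\nu_\zeta$; (ii) upgrading $\frac1n\sum_i\log s_i(\zeta)\to\int\log s\,d\nu_\zeta(s)$ so that it survives the singularity of $\log$ at $0$; and (iii) identifying the resulting logarithmic potential as that of the uniform measure on the ellipse, then invoking uniqueness of a compactly supported measure with prescribed logarithmic potential.

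For (i), the limit $\nu_\zeta$ solves a self-consistent (Dyson-type quadratic) equation for the Stieltjes transform of $H_\zeta$ whose only inputs are the variance $1/n$ and the transposed-pair correlation $z/n$; this can be derived by a resolvent/cumulant expansion, or — more economically — by first establishing the statement for the Gaussian elliptic ensemble, where $(W_{ij},W_{ji})$ is jointly Gaussian with $\mathbb{E}W_{ij}^2=1/n$ and $\mathbb{E}W_{ij}W_{ji}=z/n$, and the joint eigenvalue density is exactly solvable (Lehmann--Sommers / Ginibre-type) and converges as $n\to\infty$ to normalized Lebesgue measure on $\{(x/(1+z))^2+(y/(1-z))^2\le1\}$. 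Passing from the Gaussian model to general entries with the same first two moments and correlation is then a universality step: a Lindeberg/four-moment swapping argument for the Stieltjes transform of $H_\zeta$, replacing the $2\times 2$ transposed blocks one at a time, with resolvent bounds uniform in $\zeta$ away from a small exceptional set. Step (iii) is a bookkeeping computation once $\nu_\zeta$ is in hand.

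The genuinely hard part is (ii): the lower bound on the smallest singular value, i.e.\ $s_n(\zeta)\ge n^{-C}$ with overwhelming probability, uniformly for $\zeta$ in a neighbourhood of the ellipse, which is what rules out an anomalous contribution of near-zero singular values to $\frac1n\sum_i\log s_i(\zeta)$ and makes $\log(1/s)$ uniformly integrable against the empirical singular-value law. Standard least-singular-value estimates are built for matrices with \emph{independent} entries, so here they must be reworked to tolerate the symmetric correlation between $W_{ij}$ and $W_{ji}$; this, together with the degenerate regimes $z\to\pm1$ in which the ellipse collapses to a segment and the estimate must merge with Wigner semicircle behaviour, is where the real effort concentrates. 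Since the lemma is quoted from \cite{All:12} (building on \cite{Sommer:88,Tao:10}), for the purposes of this paper it suffices to invoke it; the outline above is how one would establish it from scratch.
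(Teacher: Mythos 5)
The paper does not actually prove Lemma~\ref{lem4}: it is imported verbatim as a cited result from \cite{All:12} (which in turn rests on \cite{Sommer:88} and the techniques surveyed in \cite{Tao:10}), so there is no in-paper argument to compare yours against line by line. Your outline is nevertheless a faithful description of how the elliptic law is established rigorously in the literature: Girko's Hermitization of $W-\zeta I_n$, the logarithmic-potential identity, the exactly solvable Gaussian elliptic ensemble as the reference model, a Lindeberg-type swapping argument (performed on transposed $2\times 2$ blocks to respect the correlation $\mathbb{E}[W_{ij}W_{ji}]=z/n$), and polynomial lower bounds on the least singular value to make $\log s$ uniformly integrable. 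You also correctly identify the least-singular-value step as the place where independence-based estimates must be reworked, and you correctly read the somewhat loosely phrased hypothesis ``symmetric entries obey mean $z/n$'' as the cross-correlation condition $n\,\mathbb{E}[W_{ij}W_{ji}]=z$, with the $z=0$ case collapsing to Lemma~\ref{lem3}. Two caveats keep this an outline rather than a proof: the heavy steps (the correlated least-singular-value bound, uniformity in $\zeta$, and the degenerate limits $z\to\pm 1$) are named but not executed, and for the \emph{real} Gaussian elliptic ensemble the finite-$n$ eigenvalue density is not of pure Ginibre form (a macroscopic fraction of eigenvalues sits on the real axis), though the limiting spectral distribution is still the uniform ellipse law, so the reference-model step survives. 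Since the paper itself only invokes the lemma, citing \cite{All:12} as you do in your final sentence is the appropriate level of rigor here; your sketch is a correct account of what lies behind that citation.
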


\begin{lemma}\label{lem5} \textup{Consider an ellipse \[
\displaystyle{\left(\frac{x+c}{1+N_{a}}\right)}^{2}+{\left(\frac{y}{1-N_{b}}\right)}^{2}\leq 1,
\]
 then \[
 \max(|Q_{\text{leftmost }}|, |Q_{\text{rightmost }}|,|Q_{\text{uppermost }}|)=1+N_{a}+|c|,
 \]
 where $N_{a}>N_{b}>0$, $c\in\mathbb{R}$. $Q_{\text{leftmost}}$, $Q_{\text{rightmost}}$ and $Q_{\text{uppermost}}$ represent the leftmost, rightmost and uppermost points of above ellipse, respectively.}
  \end{lemma}
  \begin{proof} Substituting  $y=0$ into the boundary equation of this ellipse, we have \[
  Q_{\text{leftmost}}=(-1-N_{a}-c, 0)
  \] and
  \[
  Q_{\text{rightmost}}=(1+N_{a}-c, 0).
  \]
  If $c\leq 0$, then

\vspace{-0.5cm}
  \begin{align*}|Q_{\text{leftmost}}|&=|-1-N_{a}+|c||\\&=1+N_{a}+|c|\\&\geq |1+N_{a}-c|=|Q_{\text{rightmost}}|.\end{align*}
    If $c>0$, then

\vspace{-0.5cm}
\begin{align*}|Q_{\text{leftmost}}|&=|-1-N_{a}-c|\\&<1+N_{a}+|c|=|Q_{\text{rightmost}}|.\end{align*} Thus, we have \[
\max(|Q_{\text{leftmost}}|, |Q_{\text{rightmost }}|)=1+N_{a}+|c|.
 \]
 Furthermore, Substituting  $x=-c$ into the boundary equation of this ellipse, we have \[
 Q_{\text{uppermost}}=(-c, 1-N_{b})
 \] and

\vspace{-0.5cm}
  \begin{align*}|Q_{\text{uppermost}}|&=\sqrt{(1-N_{b})^2+c^2}\\ &\leq\sqrt{(1+N_{a})^2+c^2}\\&\leq\sqrt{(1+N_{a}+|c|)^2}\\&=1+N_{a}+|c|.
  \end{align*}
  Hence, this lemma holds.
  \end{proof}

\section{Problem setup}\label{pro1}
In this section, following a similar analysis to that used for the first-order continuous-time system as presented in \cite{Olfati07, Yu10}, we consider the convergence rate by examining the eigenvalues of matrix $W$ in system (\ref{equ1}).

Let matrix $W$ be transformed into the ``canonic'' form as follows:
\[
W=\left[\begin{array}{ccccc}
W_{11} & * & * & \ldots & * \\
 O & W_{22} & * & & * \\
 O &  O & W_{33} & & * \\
\vdots & \vdots& \vdots & \ddots & \vdots \\
 O &  O &  O & \ldots & W_{s s}
\end{array}\right],
\]
where each block matrix $W_{ii}$ is irreducible, and $\text{Eig}(W)$ and $\text{Eig}(W_{ii})$ are the sets of eigenvalues of $W$ and $W_{ii}$, respectively. Moreover, $\text{Eig}(W)=\bigcup_{i=1}^{s} \text{Eig}(W_{ii})$. The modulus of its eigenvalues can be arranged in decreasing order \begin{equation}\label{equ_r1}\rho(W)\geq\rho_{2}(W)\geq\cdots\geq\rho_{m}(W), 1<m\leq n,\end{equation} where $\rho_{i}(W)$ denotes the $i$-th
largest modulus of eigenvalues. Especially, the eigenvalue $\lambda_{2}$ satisfying $|\lambda_{2}|=\rho_{2}(W)$ is called the second-largest modulus eigenvalue of $W$.

\begin{lemma}\label{lem7}
\textup{For random mixture interactions and complex mixture interactions, the system (\ref{equ001}) achieves convergence. Furthermore, the convergence rate $r$ of system (\ref{equ001}) is
\[
\left\{\begin{array}{l}
   \text{-log}(\rho(W)), \text{if} \ \rho(W)<1,\\
   \text{-log}(\rho_{2}(W)), \text{otherwise},
   \end{array}\right.
   \]
   where $\rho_{2}(W)$ is defined in equation (\ref{equ_r1}).}
\end{lemma}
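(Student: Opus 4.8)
The plan is to reduce the iteration $X(k)=W^{k}X(0)$ to the Jordan structure of $W$ and then read off both convergence and its exponential rate from the eigenvalues, using structural balance theory to control the diagonal blocks of the canonical form. First I would record two structural facts. Since $\sum_{j=1}^{n}|W_{ij}|=1$ for every $i$ (Remark~2), the matrix $|W|$ is row-stochastic, so $\mathbf{1}_{n}$ is a right eigenvector of $|W|$ for the eigenvalue $1$ and hence $\rho(W)\le\rho(|W|)=1$; in particular the two cases in the statement are exhaustive, ``otherwise'' meaning exactly $\rho(W)=1$. Moreover $d>0$ forces $W_{ii}=d/(d+|S|_{i})>0$, so every vertex carries a self-loop and the restriction of $\mathcal{G}(|W|)$ to any strongly connected component is primitive (irreducible and aperiodic).

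Next I would pass to the canonical block-triangular form displayed above, $W=\mathrm{diag}(W_{11},\dots,W_{ss})+(\text{strictly block-upper-triangular})$ with each $W_{ii}$ irreducible and $\mathrm{Eig}(W)=\bigcup_{i}\mathrm{Eig}(W_{ii})$, and classify the diagonal blocks. For an OSCC at least one row of $|W_{ii}|$ has sum strictly below $1$ (mass leaks to another SCC), so irreducibility gives $\rho(W_{ii})\le\rho(|W_{ii}|)<1$. For a CSCC, $|W_{ii}|$ is primitive and row-stochastic, hence $\rho(|W_{ii}|)=1$ with $1$ a simple eigenvalue and every other eigenvalue strictly inside the unit disk; invoking structural balance theory, if the component is structurally balanced there is a signature matrix $D_{i}$ with $W_{ii}=D_{i}|W_{ii}|D_{i}$, so $W_{ii}$ is similar to $|W_{ii}|$ and has $1$ as a simple eigenvalue with all others of modulus $<1$, whereas if the component is structurally unbalanced a Perron--Frobenius/Wielandt comparison yields the strict inequality $\rho(W_{ii})<\rho(|W_{ii}|)=1$. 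Consequently every eigenvalue of $W$ lies in the closed unit disk, every unit-modulus eigenvalue equals $1$ and is semisimple, and $\rho(W)=1$ precisely when at least one CSCC is structurally balanced.

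From here convergence is immediate: in Jordan form the blocks attached to eigenvalues of modulus $<1$ vanish as $k\to\infty$ and the (diagonal) blocks attached to the eigenvalue $1$ are constant, so $\Pi:=\lim_{k\to\infty}W^{k}$ exists and $\lim_{k\to\infty}X(k)=\Pi X(0)$ exists for every $X(0)$, which proves the first assertion. For the rate, set $N:=W-\Pi$; since $\Pi$ is the spectral projection onto $\ker(W-I)$ one has $\Pi^{2}=\Pi$, $\Pi N=N\Pi=0$, and $\rho(N)=\mu$ where $\mu=\rho(W)$ when $\rho(W)<1$ (then $\Pi=0$) and $\mu=\rho_{2}(W)$ when $\rho(W)=1$ (the largest modulus among eigenvalues $\ne 1$). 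Then $W^{k}=\Pi+N^{k}$, so $X(k)-\Pi X(0)=N^{k}X(0)$, and by Gelfand's formula together with the polynomial prefactor coming from the sizes of the Jordan blocks of $N$ one gets $\|X(k)-\Pi X(0)\|=O(k^{p}\mu^{k})$ for some integer $p\ge 0$; hence the exponential convergence rate is exactly $r=-\log\mu$, which is the claimed dichotomy.

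The step I expect to cost the most effort is the CSCC classification: justifying via structural balance theory the clean alternative ``$\rho(W_{ii})=1$ with $1$ simple and semisimple if balanced / $\rho(W_{ii})<1$ if unbalanced,'' especially the strict inequality in the unbalanced case (this is where primitivity, hence the self-loops induced by $d>0$, is essential) and the semisimplicity of the eigenvalue $1$ in the global matrix $W$. A secondary caveat I would address is the degenerate situation of two or more structurally balanced CSCCs, in which $\rho(W)=\rho_{2}(W)=1$: there the limit still exists, but ``rate'' must be read as the decay rate toward the (component-wise) bipartite-consensus limit, governed by the largest modulus strictly below $1$; I would either state this explicitly or, as the bipartite-consensus setting of the Definition already does, restrict to networks possessing a single structurally balanced CSCC.
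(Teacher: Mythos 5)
Your proposal is correct and follows essentially the same route as the paper's proof: reduce to the canonical block-triangular form, show $\rho(W_{ii})<1$ for OSCCs and for structurally unbalanced CSCCs while balanced CSCCs contribute a simple eigenvalue $1$ (the paper cites the Ger\v{s}gorin-type Lemma \ref{lem1} and Theorem 1 of \cite{Xia:16} where you argue via row sums and a Wielandt comparison), then conclude semisimplicity of $1$ and read off the rate from the Jordan/spectral decomposition of $W^{k}$. Your spectral-projection formulation $W^{k}=\Pi+N^{k}$ and your caveat about the degenerate case $\rho_{2}(W)=1$ are fine refinements, but they do not change the argument in substance.
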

\begin{proof} By Lemma \ref{lem1}, if $\mathcal{G}(W_{ii})$ is an OSCC, $\rho(W_{ii})<1$.
 If $\mathcal{G}(W_{ii})$ is a CSCC, the distribution of eigenvalues of $W$ is divided into two cases:

1). If $\mathcal{G}(W_{ii})$ is structurally balanced, there exists a nonsingular matrix $S$ such that $\hat{W}_{ii}=S^{-1}W_{ii}S$ and $\hat{W}_{ii}$ is a nonnegative irreducible matrix. By Perron-Frobenius Theorem, $\exists |\lambda_{j}(W_{ii})|=1 \Leftrightarrow\lambda_{j}(W_{ii})=1$ and 1 is an algebraically simple eigenvalue of $W_{ii}$;

2). If $\mathcal{G}(W_{ii})$ is structurally unbalanced, by Theorem 1 in \cite{Xia:16}, we have $\rho(W_{ii})<1$.
Therefore, system (\ref{equ001}) can achieve convergence. Moreover, if 1 is an eigenvalue of $W$, then it is semisimple. Note that here semisimple means that the geometric and algebraic multiplicities are
the same, i.e., all Jordan blocks of the eigenvalue 1 are 1 by 1.

If $\rho(W)=1$, it follows that $W^k$ can be rewritten in the following form:

\vspace{-0.5cm}
   \begin{align}
   W^{k}&=Q^{-1}J^{k}Q\nonumber\\
   &=Q^{-1}\begin{bmatrix}
   J_{1} & O\\
   O   &  J_{2}
   \end{bmatrix}^{k}Q\nonumber\\
   &=\begin{bmatrix}Q_{1}^{-1} & Q_{2}^{-1} \end{bmatrix}\begin{bmatrix}
   J_{1} & O\\
   O   &  J_{2}
   \end{bmatrix}^{k}\begin{bmatrix}Q_{1}\\ Q_{2} \end{bmatrix}\nonumber\\
   &=Q_{1}^{-1}J_{1}^{k}Q_{1}+Q_{2}^{-1}J_{2}^{k}Q_{2},
   \nonumber\end{align}
   where $J_{1}$ is a matrix composed of all Jordan block matrices corresponding to eigenvalue 1. $Q_{1}^{-1}$ and $Q_{1}$ consist of all right eigenvectors and left eigenvectors corresponding to eigenvalue 1, respectively.

   For one Jordan block matrix $J_{i}\in\mathbb{R}^{m_{i}\times m_{i}}$ corresponding to eigenvalue $|\lambda_{i}|<1$, when $k\gg m_{i}$, one obtains
    \[
    [J_{i}^k]_{u,v}=\left\{\begin{array}{l}
     \lambda_{i}^k, \text{if} \ u=v;\\
     \binom{k}{v-u}\lambda_{i}^{k-{(v-u)}}, \text{if} \ u<v\leq m_{i};\\
     0, \text{otherwise}.
       \end{array}\right.
       \]
    It follows that the convergence rate of $J_{i}^{k}$ as $k\rightarrow \infty$ is governed by $\rho(J_{i})$. Thus, we consider $-\text{log}(\rho(J_{i}))$ as the convergence rate of $J_{i}^{k}$.

For one Jordan block matrix $J_{i}\in\mathbb{R}^{m_{i}\times m_{i}}$ corresponding to semisimple eigenvalue $\lambda_{i}=1$, when $k\gg m_{i}$, one obtains
  \[
  [J_{i}^k]_{u,v}=\left\{\begin{array}{l}
   \lambda_{i}^k=1, \ \text{if} \ u=v;\\
   0, \ \text{if} \ u\neq v.
     \end{array}\right.
     \]
   Then, we find that the convergence rate of  $W^{k}$ depends on how quickly $\rho(J_{i})$ goes to zero. Consequently, we can generalize the convergence rate of opinions from the unsigned network, as discussed in \cite{sen}, to the signed network scenario. Specifically, the convergence rate $r$ of system (\ref{equ001}) is
\[
\left\{\begin{array}{l}
   \text{-log}(\rho(W)), \text{if} \ \rho(W)<1,\\
   \text{-log}(\rho_{2}(W)), \text{otherwise}.
   \end{array}\right.
   \]
\end{proof}

From Lemma \ref{lem7}, according to the structural balance theory, we can identify the convergence rate of system (\ref{equ001}) by the spectral radius or the second-largest modulus eigenvalue of $W$. However, it remains unclear whether complex interaction types affect the convergence speed of the system. Next, we provide an example to demonstrate the significant impact that various interaction types have on the convergence rate $r$.

\begin{figure*}[ht]
    \centering
    \includegraphics[scale=0.32]{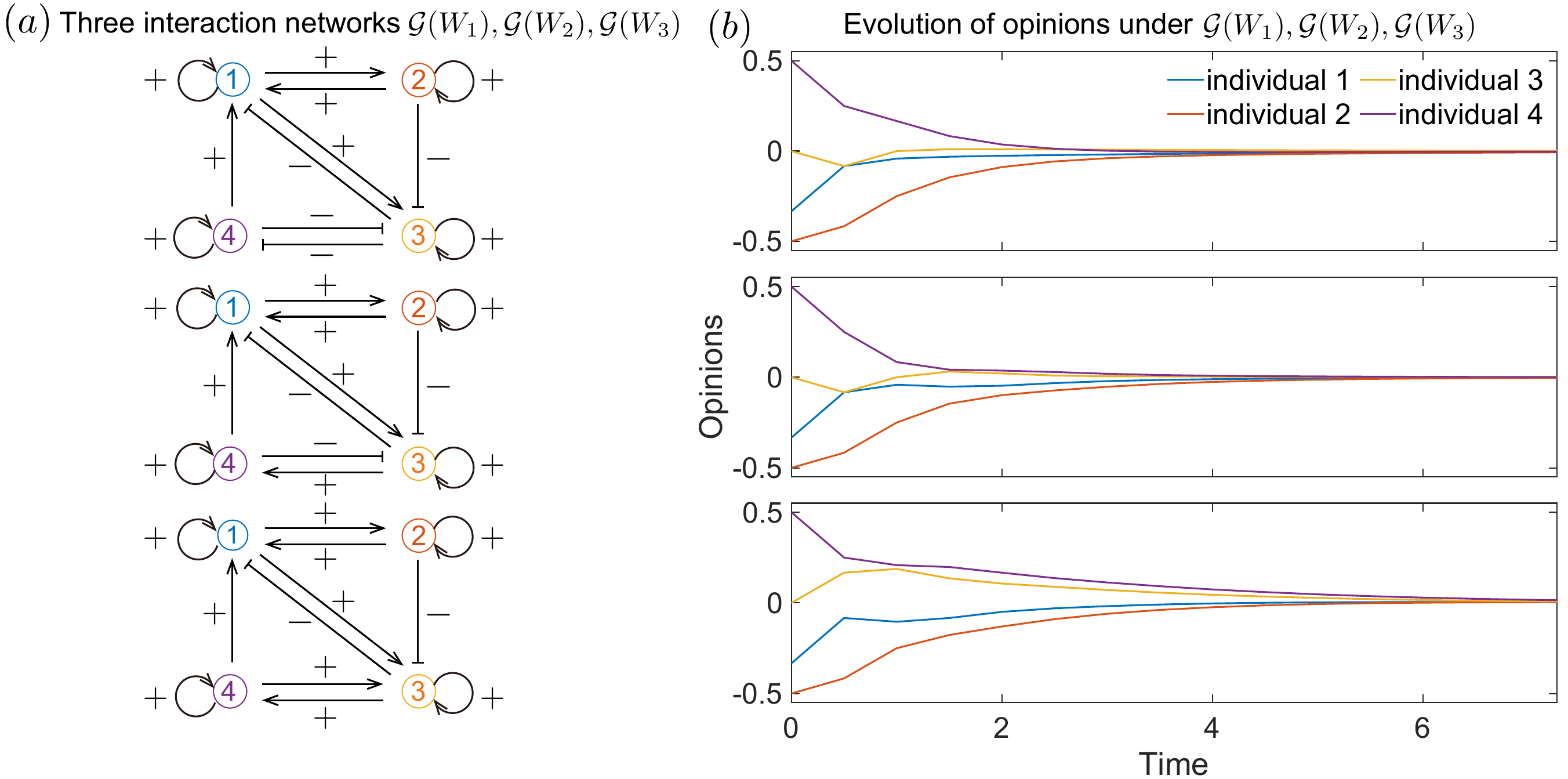}
    \caption{The effect of different interaction types on the convergence rate of system (\ref{equ001}). As shown in Fig. \ref{pic-0} (a), the interaction type between individuals 3 and 4 varies, whereas the other interaction types remain consistent across those networks.}
\label{pic-0}
  \end{figure*}
\begin{example}\label{exa1}\textup{
Consider the convergence rate of system (\ref{equ001}) on three interaction networks $\mathcal{G}(W_{1})$, $\mathcal{G}(W_{2})$, $\mathcal{G}(W_{3})$, where the influence matrices corresponding to these three networks are respectively given as follows: \begin{equation*}\begin{small}W_{1}=\begin{bmatrix}
0.25 & 0.25& -0.25& 0.25\\
0.5 & 0.5& 0& 0\\
0.25 & -0.25& 0.25& -0.25\\
0 & 0& -0.5& 0.5
\end{bmatrix}\end{small},\end{equation*} \begin{equation*}\begin{small}W_{2}=\begin{bmatrix}
0.25 & 0.25& -0.25& 0.25\\
0.5 & 0.5& 0& 0\\
0.25 & -0.25& 0.25& -0.25\\
0 & 0& 0.5& 0.5
\end{bmatrix}\end{small},\end{equation*} \begin{equation*}\begin{small}W_{3}=\begin{bmatrix}
0.25 & 0.25& -0.25& 0.25\\
0.5 & 0.5& 0& 0\\
0.25 & -0.25& 0.25& 0.25\\
0 & 0& 0.5& 0.5
\end{bmatrix}\end{small}.\end{equation*}}

\textup{Note that those three networks share identical structures and edge weights in value, but the signs (interaction types) associated with edges in three networks are distinct. For instance, $W_{1,43}=-W_{2,43}$. Moreover, $\rho(W_{1})=0.8536$, $\rho(W_{2})=0.7203$, and $\rho(W_{3})=0.7818$. As shown in  FIG. \ref{pic-0}, complex interaction types have a significant effect on the convergence rate of the opinion dynamics model, which is often neglected in previous literature.}
\end{example}
\section{Main results}\label{main1}
In this section, we first derive the convergence rate for random mixture interactions and further quantify the effect that some key factors have on the convergence rate, including population size $n$, network connectivity $P$, and self-confidence level $d$. After that, the convergence rate of system (\ref{equ001}) with complex mixture interactions is discussed. Finally, we present the effect of mutual interactions on convergence rate by considering two mixture scenarios $(+/+,-/-)$ and $(-/-,-/0)$. It should be noted that the convergence rate is considered for large population size $n$ in this paper.
\subsection{Random mixture interactions}
\begin{theorem}\label{the1}\textup{
The convergence rate of the system (\ref{equ001}) with random mixture interactions is
 \begin{align*}-\text{log}\left(\frac{\sqrt{nP\sigma^2}+d}{(n-1)P\mathbb{E}(|Z|)+d}\right).\end{align*}}
\end{theorem}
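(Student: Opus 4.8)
The plan is to decompose the matrix $W$ into a mean part and a zero-mean random part, then apply the random matrix results (Lemmas \ref{lem3}, \ref{lem4}) to the random part and low-rank perturbation theory to absorb the mean part. Concretely, write $W = D + A$, where $D = W_{11}I = \frac{d}{d+C_{\text{r}}}I$ captures the common diagonal (self-confidence) term — recall from the Remark that $W_{11}=\cdots=W_{nn}$ for large $n$ — and $A$ is the off-diagonal matrix with entries $A_{ij} = S_{ij}/(d+|S_i|)$ for $i\neq j$ and $A_{ii}=0$. Since $|S_i|\approx C_{\text{r}} = (n-1)P\mathbb{E}(|Z|)$ is essentially deterministic, $A$ has i.i.d. off-diagonal entries with mean $\mathbb{E}(S_{ij})/(d+C_{\text{r}}) = 0$ and variance $\operatorname{Var}(S_{ij})/(d+C_{\text{r}})^2 = P\sigma^2/(d+C_{\text{r}})^2$. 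To match the normalization in Lemma \ref{lem3}, I would rescale: the matrix $\sqrt{n}\,A/\sqrt{nP\sigma^2/(d+C_{\text{r}})^2}$ has entries with variance $1/n$, so its spectrum fills the unit disk, whence $\rho(A)\to \sqrt{nP\sigma^2}/(d+C_{\text{r}})$ as $n\to\infty$.

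Next I would handle the shift by $D$. Since $D$ is a scalar multiple of the identity, $\mathrm{Eig}(W) = \frac{d}{d+C_{\text{r}}} + \mathrm{Eig}(A)$, so the bulk spectrum of $W$ is a disk centered at $\frac{d}{d+C_{\text{r}}}$ with radius $\frac{\sqrt{nP\sigma^2}}{d+C_{\text{r}}}$. The rightmost (in modulus) point of this disk is at $\frac{d+\sqrt{nP\sigma^2}}{d+C_{\text{r}}} = \frac{d+\sqrt{nP\sigma^2}}{d+(n-1)P\mathbb{E}(|Z|)}$, which is exactly the quantity whose negative logarithm appears in the statement. The remaining point is to argue that this bulk radius is in fact $\rho(W)$ (or $\rho_2(W)$ in the degenerate consensus case) in the sense of Lemma \ref{lem7}: one checks that for the random mixture model the mean matrix $\mathbb{E}(S) = O$ (since $\mathbb{E}(S_{ij})=0$), so unlike the complex mixture case there is no rank-one outlier eigenvalue to worry about — the spectrum is governed entirely by the bulk. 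Then $r = -\log\rho(W)$ by Lemma \ref{lem7} gives the claimed formula, with the understanding that when $\rho(W)\ge 1$ (i.e. a consensus eigenvalue at $1$ survives) the same disk radius computation yields $\rho_2(W)$.

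The main obstacle I anticipate is making the replacement $|S_i| \approx C_{\text{r}}$ rigorous and controlling the resulting perturbation. The row sums $|S|_i$ are not exactly constant; they are sums of $n-1$ i.i.d. terms, so they fluctuate by $O(\sqrt{n})$ around the mean $C_{\text{r}} = O(n)$, i.e. a relative fluctuation of $O(1/\sqrt n)$. Replacing the true $W$ by its "constant-row-sum" idealization introduces a perturbation whose operator norm must be shown to be $o(1)$ relative to the spectral radius $\rho(A) = O(1)$, so that eigenvalue-continuity (or a Bauer–Fike-type estimate) lets us transfer the circular-law conclusion to $W$ itself. A careful treatment would invoke concentration of the row sums together with the fact that the normalized entries remain within the hypotheses of the circular/ellipse laws (which tolerate non-identical but suitably controlled entries). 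I would also need to verify that the diagonal term $W_{ii} = d/(d+|S_i|)$ is genuinely a vanishing perturbation of the scalar $d/(d+C_{\text{r}})$ in the same sense. Granting these concentration facts — which the paper treats at the heuristic "$\approx$" level throughout Subsections \ref{subsec1} and \ref{subsec2} — the eigenvalue estimate and hence the convergence rate formula follow directly from Lemmas \ref{lem3}–\ref{lem5} and \ref{lem7}.
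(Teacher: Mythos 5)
Your proposal is correct and follows essentially the same route as the paper's proof: subtract the (constant) diagonal, apply the circular law of Lemma \ref{lem3} to the zero-mean off-diagonal part to get a bulk disk of radius $\sqrt{nP\sigma^2}/(d+C_{\text{r}})$, shift the disk back by $d/(d+C_{\text{r}})$, note the absence of a mean-induced outlier since $\mathbb{E}(S_{ij})=0$, and invoke Lemma \ref{lem7}. Your added remarks on the $O(1/\sqrt{n})$ fluctuation of the row sums address a point the paper treats only at the heuristic ``$\approx$'' level; the only small step you leave implicit is the paper's explicit check that $\sqrt{nP\sigma^2}+d<(n-1)P\mathbb{E}(|Z|)+d$ for large $n$, so that $\rho(W)<1$ and the rate is indeed $-\log\rho(W)$.
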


\begin{proof}
 For random mixture interactions, we first consider the eigenvalue distribution of matrix $\bar{W}=W-\text{diag}\{d/(d+C_{\text{r}})\}$. Then, some statistics of matrix $\bar{W}$ are given as follows: \[
 \left\{\begin{array}{l}
\mathbb{E}\left(\bar{W}_{ij}\right)=\displaystyle\frac{\mathbb{E}\left(S_{ij}\right)}{d+C_{\text{r}}}=0, \\
\mathbb{E}\left(\bar{W}_{ij}^{2}\right)=\displaystyle\frac{\mathbb{E}\left(S_{ij}^{2}\right)}{(d+C_{\text{r}})^2}=\displaystyle\frac{P\sigma^2}{(d+C_{\text{r}})^2}, \\
\operatorname{Var}\left(\bar{W}_{i j}\right)=\mathbb{E}\left(\bar{W}_{ij}^{2}\right)-\mathbb{E}^{2}\left(\bar{W}_{ij}\right)=\mathbb{E}\left(\bar{W}_{ij}^{2}\right).
\end{array}\right.\]
Let $F=\displaystyle\frac{\bar{W}}{\sqrt{n\operatorname{Var}\left(\bar{W}_{i j}\right)}}$, then we have
\[
\left\{\begin{array}{l}
\mathbb{E}\left(F_{ij}\right)=0, \\
\mathbb{E}\left(F_{ij}^{2}\right)=\displaystyle\frac{1}{n}, \\
\operatorname{Var}\left(F_{i j}\right)=\mathbb{E}\left(F_{ij}^{2}\right)-\mathbb{E}^{2}\left(F_{ij}\right)=\displaystyle\frac{1}{n}.
\end{array}\right.
\]
 According to Lemma \ref{lem3}, the eigenvalues of $F$ are uniformly distributed in a unit circle centered at $(0,0)$, as $n \rightarrow \infty$. It follows that when $n$ is sufficiently large, the eigenvalue distribution of $\bar{W}$ is uniform distributed in a circle of radius approximately

\vspace{-0.5cm}
 \begin{align*}\rho(\bar{W})=\sqrt{n\operatorname{Var}\left(\bar{W}_{i j}\right)}=\displaystyle\frac{\sqrt{nP\sigma^2}}{C_{\text{r}}+d}=\frac{\sqrt{nP\sigma^2}}{(n-1)P\mathbb{E}(|Z|)+d}.\end{align*}

Finally, note that the effect of $W_{ii}$: this shifts the circle so that it is now centered at $(W_{ii},0)$. Then, we have
\[
\rho(W)=\frac{\sqrt{nP\sigma^2}+d}{(n-1)P\mathbb{E}(|Z|)+d}.
\]
When $n$ is large, we have \begin{equation*}(n-1)P\mathbb{E}(|Z|)+d \approx nP\mathbb{E}(|Z|)+d >\sqrt{nP\sigma^2}+d,\end{equation*} i.e., $\rho(W)<1.$ By Lemma \ref{lem7}, the convergence rate is given as

\vspace{-0.5cm}
\begin{align}r&=-\text{log}(\rho(W))=-\text{log}\left(\frac{\sqrt{nP\sigma^2}+d}{(n-1)P\mathbb{E}(|Z|)+d}\right)\label{equ_32}.\end{align}
\end{proof}
\begin{remark}
\textup{According to Theorem \ref{the1}, for random mixture interactions, it is shown that $\rho(W)<1$ holds from an algebraic perspective, i.e., the system (\ref{equ001}) is stable. In fact, from the viewpoint of signed graph theory, we can also analyze the reasons for stability via Theorem 1 in \cite{Xia:16}. Based on existing results, the following equivalent results hold: $\rho(W)<1$ $\Leftrightarrow$ all CSCCs are structurally unbalanced $\Leftrightarrow$ there is at least one negative cycle in each CSCC. When $n$ is large, the random mixture interaction network is strongly connected and there exists at least one negative cycle. In what follows, Example \ref{exa2} is given to further illustrate this point.}
\end{remark}
\begin{example}\label{exa2}
\textup{For the random mixture of different interaction types, since $Z\sim\mathcal{N}(0,\sigma^2)$, when $n$ is large enough, we have $P_{+/+}=P_{-/-}=0.25$, $P_{+/-}=0.5$, and $P_{+/0}=P_{-/0}=0$. Consider the probability of of the random interaction network $\mathcal{G}(W)$ with three individuals being structurally unbalanced. }

\textup{By the definition of structural balance, the signed interaction network is structurally balanced if and only if the interaction scenario is just $(+/+,+/+,+/+)$ or $(-/-,-/-,+/+)$ (see FIG. \ref{pic-fs}). By some calculations, the probability of $\mathcal{G}(W)$ being structurally unbalanced is $0.8$. As the population size increases, the probability that $\mathcal{G}(W)$ is structurally balanced tends to 1, which implies that $\rho(W)<1$ for large $n$.}
\begin{figure}[ht]
    \centering
    \includegraphics[scale=0.35]{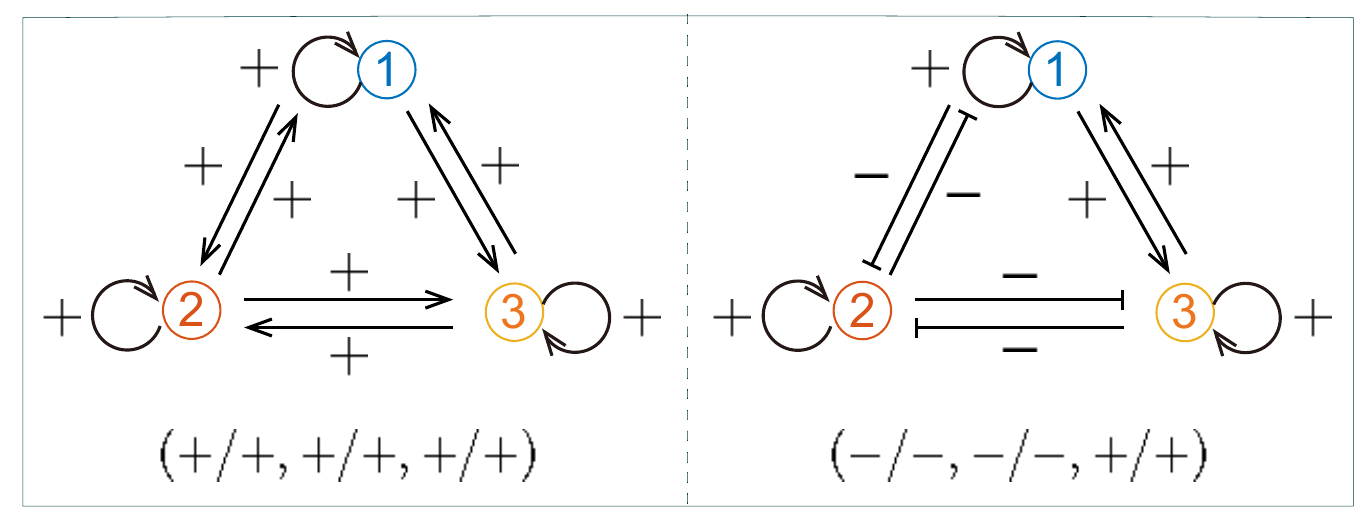}
    \caption{The signed interaction network $\mathcal{G}(W)$ is structurally balanced.}
    \label{pic-fs}
    \end{figure}
\end{example}
\begin{corollary}\label{cor1}\textup{
 For the system (\ref{equ001}) with random mixture interactions, the convergence rate is proportional to the population size and network connectivity, while it is inversely proportional to the individuals' self-confidence level.}
\end{corollary}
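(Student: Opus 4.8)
The plan is to work directly from the closed form obtained in Theorem~\ref{the1}, namely
\[
r=-\log\big(\rho(W)\big),\qquad \rho(W)=\frac{\sqrt{nP\sigma^{2}}+d}{(n-1)P\mathbb{E}(|Z|)+d},
\]
and to read off the three monotonicity assertions (``proportional to $n$ and $P$, inversely proportional to $d$'' being understood as monotone increasing in $n$ and $P$, monotone decreasing in $d$) by tracking the sign of the relevant partial derivative. Since $t\mapsto-\log t$ is strictly decreasing on $(0,1]$ and $\rho(W)<1$ for large $n$ (established at the end of the proof of Theorem~\ref{the1}), it suffices to show that $\rho(W)$ is decreasing in $n$, decreasing in $P$, and increasing in $d$.

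First I would dispose of the dependence on $d$, which is the cleanest. Write $x=\sqrt{nP\sigma^{2}}$ and $y=(n-1)P\mathbb{E}(|Z|)$, so that $\rho(W)=(x+d)/(y+d)$ with $0<x<y$ for $n$ large — the inequality $x<y$ is precisely the one used to conclude $\rho(W)<1$ in Theorem~\ref{the1}. Then $\dfrac{\partial\rho(W)}{\partial d}=\dfrac{y-x}{(y+d)^{2}}>0$, hence $r$ is strictly decreasing in $d$.

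Next I would treat $n$ and $P$ by differentiating the quotient. Regarding $n$ as a continuous parameter, the numerator of $\dfrac{\partial\rho(W)}{\partial n}$ (after clearing the positive denominator squared) is
\[
\frac{\sqrt{P\sigma^{2}}}{2\sqrt{n}}\big[(n-1)P\mathbb{E}(|Z|)+d\big]-P\mathbb{E}(|Z|)\big[\sqrt{nP\sigma^{2}}+d\big].
\]
Collecting the terms carrying the factor $P\mathbb{E}(|Z|)\sqrt{nP\sigma^{2}}$ leaves the coefficient $\tfrac{n-1}{2\sqrt n}-\sqrt n=-\tfrac{n+1}{2\sqrt n}<0$, while the remaining $d$-terms contribute $d\big(\tfrac{\sqrt{P\sigma^{2}}}{2\sqrt n}-P\mathbb{E}(|Z|)\big)<0$ for $n$ large; hence $\dfrac{\partial\rho(W)}{\partial n}<0$ and $r$ is increasing in $n$. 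The computation in $P$ is entirely analogous: the leading terms combine to $-\tfrac12(n-1)\mathbb{E}(|Z|)\sqrt{nP\sigma^{2}}<0$ and the $d$-terms are again negative for $n$ large, so $\dfrac{\partial\rho(W)}{\partial P}<0$ and $r$ is increasing in $P$.

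The main obstacle — really the only nonroutine point — is that $n$ (resp. $P$) enters $\rho(W)$ both in the numerator through $\sqrt n$ (resp. $\sqrt P$) and in the denominator through $n-1$ (resp. $P$), so monotonicity is invisible term by term; one must weigh the $O(\sqrt n)$ growth of the numerator against the $O(n)$ growth of the denominator, which is exactly why the statement is made for large populations. A slicker conceptual route that avoids the derivative bookkeeping is to use the large-$n$ asymptotics $\rho(W)\approx \sigma/\big(\mathbb{E}(|Z|)\sqrt{nP}\big)$, so that $r\approx \tfrac12\log(nP)+\log\big(\mathbb{E}(|Z|)/\sigma\big)$, which displays at a glance that $r$ increases with $n$ and $P$; I would keep the derivative argument as the rigorous version and cite the asymptotic form only as the qualitative explanation.
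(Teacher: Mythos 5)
Your proposal is correct and follows essentially the same route as the paper: it differentiates the closed-form spectral radius $\rho(W)=\bigl(\sqrt{nP\sigma^{2}}+d\bigr)/\bigl((n-1)P\mathbb{E}(|Z|)+d\bigr)$ from Theorem~\ref{the1} with respect to $n$, $P$, and $d$, and determines the sign of each partial derivative for large $n$, exactly as in the paper's proof of Corollary~\ref{cor1}. Your $(x+d)/(y+d)$ reformulation for the $d$-monotonicity and the asymptotic remark $r\approx\tfrac12\log(nP)+\log\bigl(\mathbb{E}(|Z|)/\sigma\bigr)$ are only cosmetic streamlinings of the same argument.
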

\indent
\begin{proof} We prove this corollary in the following three steps:
\subsubsection{The effect of population size $n$ on convergence rate}

Differentiating (\ref{equ_32}) with respect to $n$, one obtains

\vspace{-0.5cm}
\begin{align*}&\frac{\partial (\rho(W))}{\partial n}=\frac{\frac{\eta\sqrt{P\sigma^2}}{2\sqrt{n}}-P\mathbb{E}(|Z|)(\sqrt{nP\sigma^2}+d)}{\eta^2},
\\ &=\frac{\frac{\eta\sqrt{P\sigma^2}}{2}-P\mathbb{E}(|Z|)(n\sqrt{P\sigma^2}+\sqrt{n}d)}{\sqrt{n}\eta^2},
\\ &=\frac{-\frac{n+1}{2}P\mathbb{E}(|Z|)\sqrt{P\sigma^2}+(\frac{\sqrt{P\sigma^2}}{2}-P\mathbb{E}(|Z|)\sqrt{n})d}{\sqrt{n}\eta^2}.\end{align*}
where $\eta=(n-1)P\mathbb{E}(|Z|)+d$. Since for lager $n$, \[
\frac{\sqrt{P\sigma^2}}{2}-P\mathbb{E}(|Z|)\sqrt{n}<0,\] then $\frac{\partial (\rho(W))}{\partial n}<0$, i.e., larger population size improves the convergence rate of system (\ref{equ001}).
\subsubsection{The effect of network connectivity $P$ on convergence rate}

Differentiating (\ref{equ_32}) with respect to $n$, one obtains

\vspace{-0.5cm}
\begin{align*}&\frac{\partial (\rho(W))}{\partial n}=\frac{\frac{\eta\sqrt{n\sigma^2}}{2\sqrt{P}}-(n-1)\mathbb{E}(|Z|)(\sqrt{nP\sigma^2}+d)}{\eta^2},
\\ &=\frac{-\frac{n-1}{2}P\mathbb{E}(|Z|)\sqrt{n\sigma^2}+(\frac{\sqrt{n\sigma^2}}{2}-(n-1)\mathbb{E}(|Z|)\sqrt{P})d}{\sqrt{n}\eta^2}.\end{align*}
Since for lager $n$, \[
\frac{\sqrt{n\sigma^2}}{2}-(n-1)\mathbb{E}(|Z|)\sqrt{P}<0,\] then $\frac{\partial (\rho(W))}{\partial P}<0$, i.e., larger network connectivity leads to faster convergence of system (\ref{equ001}).
\subsubsection{The effect of self-confidence level $d$ on convergence rate}

Differentiating (\ref{equ_32}) with respect to $d$, we obtain

\vspace{-0.5cm}
\begin{align*}\frac{\partial (\rho(W))}{\partial d}&=\frac{(n-1)P\mathbb{E}(|Z|)-\sqrt{nP}\sigma}{\eta^2},
\\&\approx\frac{\sqrt{nP}(nP\mathbb{E}(|Z|)-\sigma)}{\eta^2}
>0.\end{align*}
Therefore, higher self-confidence level can decrease the convergence rate of system (\ref{equ001}).
\end{proof}
\begin{remark}
In most existing literature, an upper bound of the convergence rate that implicitly incorporates parameters of network properties is obtained \cite{Ols,Xu, Liu17}. However, it is difficult to directly identify the key factors that have significant impacts on the convergence rate. In contrast to previous studies, Theorem \ref{the1} provides a new perspective derived from the analysis of random matrices and explicitly establishes a specific expression for the convergence rate. Moreover, Corollary \ref{cor1} quantifies the effect that some key factors have on it, thereby overcoming some limitations of existing research methods.
\end{remark}
\subsection{Complex mixture interactions}
In this subsection, we present the convergence rate of system $(\ref{equ001})$ with complex mixture interactions and study the effect of different interaction types on it. In Theorem \ref{the2}, we do not consider two trivial cases, where all interaction types are mistrust or trust. Hence, we need to give the following assumption.

\begin{assumption}\label{ass1}\textup{
There coexist trust and mistrust interactions in the interaction network $\mathcal{G}(W)$, i.e., $P_{+/+}+P_{+/0}\neq 1$ and $P_{-/-}+P_{-/0}\neq 1$.}
\end{assumption}

From equations (\ref{equ_28}) and (\ref{equ_29}), we can obtain Lemma \ref{lem51} below.

\begin{lemma}\label{lem51} \textup{Assumption \ref{ass1} holds if and only if $\hat{P}\neq\bar{P}$ and $\hat{P}\neq-\bar{P}$.}
  \end{lemma}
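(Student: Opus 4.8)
The plan is to reduce both sides of the claimed equivalence to elementary statements about the five nonnegative proportions $P_{+/+}, P_{-/-}, P_{+/-}, P_{+/0}, P_{-/0}$, which by construction satisfy the normalization $P_{+/+}+P_{-/-}+P_{+/-}+P_{+/0}+P_{-/0}=1$. First I would form the two relevant linear combinations of $\hat{P}$ and $\bar{P}$ directly from the definitions (\ref{equ_28}) and (\ref{equ_29}), obtaining
\[
\hat{P}-\bar{P}=P_{+/-}+2P_{-/-}+P_{-/0},\qquad \hat{P}+\bar{P}=2P_{+/+}+P_{+/-}+P_{+/0}.
\]
Both right-hand sides are combinations of nonnegative quantities with nonnegative coefficients, so each vanishes if and only if every term in it vanishes: $\hat{P}=\bar{P}$ precisely when $P_{+/-}=P_{-/-}=P_{-/0}=0$, and $\hat{P}=-\bar{P}$ precisely when $P_{+/+}=P_{+/-}=P_{+/0}=0$.

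Next I would translate these vanishing conditions back using the normalization. If $P_{+/-}=P_{-/-}=P_{-/0}=0$ then summing the five proportions forces $P_{+/+}+P_{+/0}=1$; conversely, if $P_{+/+}+P_{+/0}=1$ then the remaining three proportions are nonnegative with zero sum and hence all vanish. This gives $\hat{P}=\bar{P}\iff P_{+/+}+P_{+/0}=1$. Running the identical argument on the second identity yields $\hat{P}=-\bar{P}\iff P_{-/-}+P_{-/0}=1$.

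Finally I would assemble the lemma by contraposition. Assumption \ref{ass1} states exactly that $P_{+/+}+P_{+/0}\neq 1$ and $P_{-/-}+P_{-/0}\neq 1$, which by the two equivalences just established is the same as $\hat{P}\neq\bar{P}$ and $\hat{P}\neq-\bar{P}$, completing the proof. I do not anticipate any genuine obstacle: the argument is a one-line algebraic identity plus the positivity/normalization of the proportions. The only points requiring care are the bookkeeping of the factor $\tfrac12$ attached to $P_{+/0}$ and $P_{-/0}$ when computing $\hat{P}\pm\bar{P}$, and making explicit the standing (but implicitly used) facts that all five proportions are nonnegative and sum to one.
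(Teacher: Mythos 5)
Your proposal is correct and follows essentially the same route as the paper: reduce $\hat{P}=\pm\bar{P}$ via the definitions, nonnegativity, and the normalization of the five proportions to the pure-trust case $P_{+/+}+P_{+/0}=1$ or the pure-mistrust case $P_{-/-}+P_{-/0}=1$, then contrapose against Assumption \ref{ass1}. Your direct computation of $\hat{P}\mp\bar{P}$ as nonnegative combinations is a slightly cleaner packaging of the same elementary argument the paper gives by contradiction.
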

  \begin{proof} We prove this lemma by contradiction.

 ($\Rightarrow$) Suppose $\bar{P}=\hat{P}$, by simple calculations, then one obtains 
\begin{equation}\label{equ_9}P_{+/-}=-2P_{-/-}-P_{-}.\end{equation}
Substituting (\ref{equ_9}) into $P_{+/+}+P_{-/-}+P_{+/-}+P_{+/0}+P_{-/0}=1$, we have
\begin{equation*}P_{+/+}-P_{-/-}+P_{+/0}=1.\end{equation*} Since $P_{-/-}\geq0$, we have \begin{equation*}P_{+/+}+P_{+/0}=1,\end{equation*} which means that there only exist trust interactions. Similarly, we can also prove that if $\bar{P}=-\hat{P}$, then $P_{-/-}+P_{-/0}=1$. Since this contradicts Assumption \ref{ass1}, the sufficiency holds.

($\Leftarrow$) Suppose $P_{+/+}+P_{+/0}=1$, then \begin{equation}\label{equ_27}P_{+/-}=P_{-/-}=P_{-/0}=0.\end{equation}
Substituting (\ref{equ_27}) into (\ref{equ_28}) and (\ref{equ_29}) respectively, we have
\[
\hat{P}=P_{+/+}+\frac{1}{2}P_{+/0}=\bar{P}.
\]
By similar analysis, we can prove the sufficiency holds for $P_{-/-}+P_{-/0}=1$ case. Since this contradicts the premise $P_{+/+}+P_{+/0}\neq 1$ and $P_{-/-}+P_{-/0}\neq 1$, the necessity is established. In summary, we complete the proof of this lemma.
  \end{proof}

\begin{theorem}\label{the2} \textup{Under Assumption \ref{ass1}, the convergence rate $r$ of system (\ref{equ001}) with complex mixture interactions is
\[
\left\{\begin{array}{l}
   -\text{log}(M_{e}), \text{if} \ |n\mathbb{E}|\leq\sqrt{n\mathbb{V}},\\
   -\text{log}(\max(|\lambda_{\text{outlier}}|,M_{e})), \text{otherwise},
   \end{array}\right.
   \]
where $M_{e}=\max(\Delta_{1}, \Delta_{2})$ and
\[
\left\{\begin{array}{l}
\Delta_{1}=\sqrt{n\mathbb{V}}(1+\frac{\mathbb{T}-\mathbb{E}^2}{\mathbb{V}})+|-\mathbb{E}+d|,\\
\Delta_{2}=\sqrt{n\mathbb{V}(1-\frac{\mathbb{T}-\mathbb{E}^2}{\mathbb{V}})^2+(-\mathbb{E}+d)^2},\\
\end{array}\right.
\]
 $\lambda_{\text{outlier}}=(n-1)\mathbb{E}+\frac{\mathbb{T}-\mathbb{E}^2}{\mathbb{E}}+W_{ii}$. $\mathbb{V}$, $\mathbb{T}$ and $\mathbb{E}$ are defined in (\ref{equ_33}) below.}
\end{theorem}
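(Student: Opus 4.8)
The plan is to follow the template of Theorem~\ref{the1}, but with the circular law replaced by the elliptic law (Lemma~\ref{lem4}) for the bulk of the spectrum, and with an extra low-rank perturbation step to locate a possible outlier eigenvalue; the outcome is then fed into Lemma~\ref{lem7}. As in Theorem~\ref{the1}, I would begin from the Remark after~(\ref{equ001}): for complex mixture interactions the row sums $|S|_{i}$ concentrate about $C_{\text{m}}=(n-1)P\hat P\,\mathbb{E}(|Z|)$, so all diagonal entries of $W$ equal a common value $W_{ii}$ and, up to asymptotically negligible fluctuations, $W=W_{ii}I+\bar W$ with $\bar W_{ij}=S_{ij}/(d+C_{\text{m}})$ for $i\ne j$ and $\bar W_{ii}=0$. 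Writing $\mathbb{E},\mathbb{V},\mathbb{T}$ for the per-entry mean, the per-entry variance, and the transposed-pair correlation $\mathbb{E}(\bar W_{ij}\bar W_{ji})$ of the off-diagonal part (the quantities in~(\ref{equ_33})), I would peel off the mean and obtain
\[
W=(W_{ii}-\mathbb{E})\,I+\mathbb{E}\,\mathbf{1}_{n}\mathbf{1}_{n}^{T}+W_{0},
\]
where $W_{0}$ has zero diagonal and zero-mean off-diagonal entries with $\mathbb{E}(W_{0,ij}^{2})=\mathbb{V}$ and $\mathbb{E}(W_{0,ij}W_{0,ji})=\mathbb{T}-\mathbb{E}^{2}$. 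Thus $W$ is a scalar multiple of the identity, plus a rank-one matrix of spectral norm $n|\mathbb{E}|$, plus an ``elliptic'' random matrix.

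For the bulk I would apply Lemma~\ref{lem4} to $W_{0}/\sqrt{n\mathbb{V}}$, whose transposed-pair correlation coefficient is $z:=(\mathbb{T}-\mathbb{E}^{2})/\mathbb{V}\in[-1,1]$: as $n\to\infty$ its spectrum fills the ellipse with horizontal half-axis $1+z$ and vertical half-axis $1-z$. Undoing the scaling, the bulk of $W_{0}$ is the ellipse with half-axes $\sqrt{n\mathbb{V}}\,(1+z)$ and $\sqrt{n\mathbb{V}}\,(1-z)$, and the scalar term recentres it at $(W_{ii}-\mathbb{E},0)$ (this common diagonal value is the ``$d$'' in $\Delta_{1},\Delta_{2}$). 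Feeding the recentred ellipse into Lemma~\ref{lem5} identifies its point of largest modulus: when the horizontal axis is the longer one this is $\sqrt{n\mathbb{V}}\,(1+z)+|W_{ii}-\mathbb{E}|=\Delta_{1}$, whereas the opposite eccentricity, or a recentring large enough to push out the uppermost vertex, produces the competitor $\sqrt{n\mathbb{V}\,(1-z)^{2}+(W_{ii}-\mathbb{E})^{2}}=\Delta_{2}$; since only the larger of the two matters to leading order, the bulk contributes $M_{e}=\max(\Delta_{1},\Delta_{2})$.

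It remains to treat the rank-one term $\mathbb{E}\,\mathbf{1}_{n}\mathbf{1}_{n}^{T}$, whose only nonzero eigenvalue is $n\mathbb{E}$. By low-rank perturbation theory for i.i.d.\ and elliptic ensembles, this eigenvalue detaches from the bulk precisely when $|n\mathbb{E}|>\sqrt{n\mathbb{V}}$; in that case the outlier sits, to leading order, at $n\mathbb{E}+(\mathbb{T}-\mathbb{E}^{2})/\mathbb{E}$ (the second term being the elliptic correction produced by the $\bar W_{ij}$--$\bar W_{ji}$ correlation), and reinstating the scalar shift $W_{ii}-\mathbb{E}$ gives $\lambda_{\text{outlier}}=(n-1)\mathbb{E}+(\mathbb{T}-\mathbb{E}^{2})/\mathbb{E}+W_{ii}$. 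Hence $\rho(W)=M_{e}$ when $|n\mathbb{E}|\le\sqrt{n\mathbb{V}}$ and $\rho(W)=\max(|\lambda_{\text{outlier}}|,M_{e})$ otherwise. Finally I would invoke Lemma~\ref{lem7}: by Lemma~\ref{lem51}, Assumption~\ref{ass1} is equivalent to $\hat P\ne\pm\bar P$, which excludes the all-trust and all-mistrust cases and makes every CSCC of $\mathcal{G}(W)$ structurally unbalanced for large $n$; together with the estimates $\sqrt{n\mathbb{V}}\to0$, $W_{ii}\to0$, $\mathbb{E}\to0$ and $\lambda_{\text{outlier}}\to\bar P/\hat P$ with $|\bar P/\hat P|<1$, this forces $\rho(W)<1$, so we stay in the first branch of Lemma~\ref{lem7} and $r=-\log\rho(W)$ reproduces the stated piecewise formula.

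The step I expect to be the main obstacle is making the outlier analysis rigorous for the \emph{elliptic} rather than plain circular ensemble: the correction $(\mathbb{T}-\mathbb{E}^{2})/\mathbb{E}$ is entirely due to the correlation between $\bar W_{ij}$ and $\bar W_{ji}$, so it needs the elliptic analogue of the standard outlier theorems for i.i.d.\ matrices under bounded-rank perturbation, not merely Lemmas~\ref{lem3}--\ref{lem4}. A secondary difficulty is that the entries of $W$ are not genuinely i.i.d., since the shared denominator $d+|S_{i}|$ correlates entries within a row and~(\ref{equ_31}) only delivers concentration; one therefore has to check that replacing $|S_{i}|$ by $C_{\text{m}}$ and the diagonal by the common value $W_{ii}$ disturbs neither the limiting ellipse nor the outlier, i.e.\ that the discarded part has operator norm $o(\sqrt{n\mathbb{V}})$ and $o(|n\mathbb{E}|)$.
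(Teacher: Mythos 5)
Your proposal is correct and follows essentially the same route as the paper: the same splitting $W=(W_{ii}-\mathbb{E})I+\mathbb{E}\,\mathbf{1}\mathbf{1}^{T}+N$, the elliptic law (Lemma \ref{lem4}) with $\tau=(\mathbb{T}-\mathbb{E}^{2})/\mathbb{V}$ for the bulk, Lemma \ref{lem5} for $M_{e}$, the low-rank perturbation argument giving the outlier $n\mathbb{E}+(\mathbb{T}-\mathbb{E}^{2})/\mathbb{E}$ when $|n\mathbb{E}|>\sqrt{n\mathbb{V}}$, and Assumption \ref{ass1} via Lemma \ref{lem51} to force $|\lambda_{\text{outlier}}|<1$ and $\rho(W)<1$ so that Lemma \ref{lem7} yields $r=-\log\rho(W)$. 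The rigor caveats you raise (elliptic outlier theorem, row-sum concentration) are glossed over in the paper as well, so they are not gaps relative to its argument.
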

\begin{proof}
Just as for random mixture interactions, we first consider the eigenvalue distribution of matrix $\bar{W}=W-\text{diag}\{W_{ii}\}$. Then, we have
 \begin{equation}\label{equ_33}\left\{\begin{array}{l}
\mathbb{E}:=\mathbb{E}\left(\bar{W}_{ij}\right)=\frac{\mathbb{E}\left(S_{ij}\right)}{C_{\text{m}}+d}=\frac{P\bar{P}\mathbb{E}\left(|Z|\right)}{C_{\text{m}}+d},\\
\mathbb{E}\left(\bar{W}_{ij}^{2}\right)=\frac{\mathbb{E}\left(S_{ij}^{2}\right)}{(C_{\text{m}}+d)^2}=\frac{P\hat{P}\sigma^2}{(C_{\text{m}}+d)^2},\\
\mathbb{V}:=\operatorname{Var}\left(\bar{W}_{i j}\right)=\mathbb{E}\left(\bar{W}_{ij}^{2}\right)-\mathbb{E}^2,\\
\mathbb{T}:=\mathbb{E}\left(\bar{W}_{ij}\bar{W}_{ji}\right)=\frac{PP^{*}\mathbb{E}^{2}\left(|Z|\right)}{(C_{\text{m}}+d)^2},
\end{array}\right. \end{equation}
where $\hat{P}$, $\bar{P}$ and $P^{*}$ are defined in (\ref{equ_28}), (\ref{equ_29}) and (\ref{equ_299}), respectively.

Let $N=\bar{W}-\mathbb{E} \cdot \textbf{1}\cdot \textbf{1}^{T}+\mathbb{E}\cdot I$, then we can obtain some statistics of matrix $N$. Specifically,
\[
\left\{\begin{array}{l}
\mathbb{E}\left(N_{ij}\right)=0,\\
\mathbb{E}\left(N_{ij}^{2}\right)=\mathbb{V}=\operatorname{Var}\left(N_{i j}\right),\\
\mathbb{E}\left(N_{ij}N_{ji}\right)=\mathbb{E}\left((\bar{W}_{ij}-\mathbb{E} )(\bar{W}_{ji}-\mathbb{E} )\right)=\mathbb{T}-\mathbb{E}^2.
\end{array}\right.
\]

In the sequel, let $F=N/\sqrt{n\mathbb{V}}$, then

\[\left\{\begin{array}{l}
\mathbb{E}\left(F_{ij}\right)=0, \\
\mathbb{E}\left(F_{ij}^{2}\right)=\displaystyle\frac{1}{n}, \\
\mathbb{E}\left(F_{ij}F_{ji}\right)=\displaystyle\frac{\tau}{n}, \\
\end{array}\right.
\]
where $\tau=(\mathbb{T}-\mathbb{E}^2)/\mathbb{V}$. According to Lemma \ref{lem4}, when $n$ is sufficiently large, the eigenvalues of $F$ are uniformly distributed in an ellipse centered at (0, 0) and

\[
{\left(\frac{x}{1+\tau}\right)}^{2}+{\left(\frac{y}{1-\tau}\right)}^{2}\leq 1.
\] It follows that the eigenvalues of $N$ are uniformly distributed in an ellipse centered at (0, 0) and

\[
{\left(\frac{x}{\sqrt{n\mathbb{V}}(1+\tau)}\right)}^{2}+{\left(\frac{y}{\sqrt{n\mathbb{V}}(1-\tau)}\right)}^{2}\leq 1.
\]

Notice $\mathbb{E} \cdot \textbf{1}\cdot \textbf{1}^{T}$ is a rank-one perturbation matrix with $n-1$ zero eigenvalues and $n\mathbb{E}$ is a single eigenvalue. According to the low-rank perturbation theorem, when $|n\mathbb{E}|\leq \sqrt{n\mathbb{V}}$, all eigenvalues of $N+\mathbb{E} \cdot \textbf{1}\cdot \textbf{1}^{T}$  are still uniformly distributed in the ellipse above. When $|n\mathbb{E}|>\sqrt{n\mathbb{V}},$ $n-1$ eigenvalues of $N+\mathbb{E} \cdot \textbf{1}\cdot \textbf{1}^{T}$ are still uniformly distributed in the ellipse above, whereas an eigenvalue $\hat\lambda$ is modified as

\vspace{-0.5cm}
 \begin{align*}\hat\lambda&=n\mathbb{E}\left(\bar{W}_{ij}\right)+\frac{\mathbb{E}\left(N_{ij}N_{ji}\right)}{\mathbb{E}\left(\bar{W}_{ij}\right)}
=n\mathbb{E}+\frac{\mathbb{T}-\mathbb{E}^2}{\mathbb{E}}.\end{align*}
Since all diagonal entries $W_{ii}=\frac{d}{C_{\text{m}}+d}>0$, the eigenvalue distribution of $W$ is shifted leftwards $\frac{d}{C_{\text{m}}+d}$ along the horizontal axis. Therefore, for sufficiently large $n$, we obtain the eigenvalue distribution of $W$

When $|n\mathbb{E}|\leq \sqrt{n\mathbb{V}}$, the eigenvalues of $W$  are uniformly distributed in the ellipse
\begin{equation}\label{equ42}{\left(\frac{x+\mathbb{E}-W_{ii}}{a_{N}}\right)}^{2}+{\left(\frac{y}{b_{N}}\right)}^{2}\leq 1,\end{equation}
where $a_{N}=\sqrt{n\mathbb{V}}(1+\tau)$ and $b_{N}=\sqrt{n\mathbb{V}}(1-\tau)$.

When $|n\mathbb{E}|>\sqrt{n\mathbb{V}}$, there is also an eigenvalue distributed outside the ellipse
\begin{equation}\label{equ43}\left\{\begin{array}{l}
\displaystyle {\left(\frac{x+\mathbb{E}-W_{ii}}{a_{N}}\right)}^{2}+{\left(\frac{y}{b_{N}}\right)}^{2}\leq 1,\\
\displaystyle \lambda_{\text{outlier}}=\hat{\lambda}-\mathbb{E}+W_{ii}.
\end{array}\right.\end{equation}

 Three endpoints $Q_{\text{rightmost}}$, $Q_{\text{leftmost}}$ and $Q_{\text{uppermost}}$ (rightmost, leftmost, and uppermost) of the distribution in above ellipse and the unique endpoint $Q_{\text{outlier}}$ corresponding to eigenvalue $\lambda_{\text{outlier}}$ outside the ellipse (if it exists) can then be estimated as
\[
\left\{\begin{array}{l}
Q_{\text{rightmost}}=(a_{N}-\mathbb{E}+W_{ii}, 0),\\
Q_{\text{leftmost}}=(-a_{N}-\mathbb{E}+W_{ii}, 0), \\
Q_{\text{uppermost}}=(-\mathbb{E}+W_{ii}, b_{N}),\\
Q_{\text{outlier}}=(\lambda_{\text{outlier}}, 0)=(\hat{\lambda}-\mathbb{E}+W_{ii}, 0).
\end{array}\right.
\]
Furthermore,
\begin{equation}\label{equ_12}\left\{\begin{array}{l}
\mathbb{E}=\frac{\mathbb{E}\left(S_{ij}\right)}{C_{\text{m}}+d}=\frac{P\bar{P}\mathbb{E}\left(|Z|\right)}{(n-1)P\hat{P}\mathbb{E}\left(|Z|\right)+d}, \\
W_{ii}-\mathbb{E}=\frac{d-P\bar{P}\mathbb{E}\left(|Z|\right)}{(n-1)P\hat{P}\mathbb{E}\left(|Z|\right)+d}\\
\sqrt{n\mathbb{V}}=\frac{\sqrt{nP\hat{P}\sigma^2-nP^2\bar{P}^2\mathbb{E}^2\left(|Z|\right)}}{(n-1)P\hat{P}\mathbb{E}\left(|Z|\right)+d},\\
\tau=\frac{P^{*}\mathbb{E}^2\left(|Z|\right)-P\bar{P}^2\mathbb{E}^2\left(|Z|\right)}{\hat{P}\sigma^2-P\bar{P}^2\mathbb{E}^2\left(|Z|\right)},\\
\lambda_{\text{outlier}}=\frac{P^{*}\mathbb{E}^2\left(|Z|\right)+(n-2)P\bar{P}^2\mathbb{E}^2\left(|Z|\right)+d\bar{P}\mathbb{E}\left(|Z|\right)}{\bar{P}\mathbb{E}\left(|Z|\right)((n-1)P\hat{P}\mathbb{E}\left(|Z|\right)+d)}.
\end{array}\right.\end{equation}

According to (\ref{equ_12}), for large $n$, $\sqrt{n\mathbb{V}}>0$ is sufficiently small and $\tau$ is bound. Moreover, $a_{N}$ and $b_{N}$ are sufficiently small. Furthermore, $|W_{ii}-\mathbb{E}|<1$. Hence, $M_{e}=\max\{\Delta_{1},\Delta_{2}\}<1$. Moreover,
for large $n$, we have

\vspace{-0.5cm}
\begin{align*}\lambda_{\text{outlier}}&\approx\frac{(n-2)P\bar{P}^2\mathbb{E}^2\left(|Z|\right)
+d\bar{P}\mathbb{E}\left(|Z|\right)}{\bar{P}\mathbb{E}\left(|Z|\right)((n-1)P\hat{P}\mathbb{E}\left(|Z|\right)+d)}\nonumber\\
&\approx\frac{(n-1)P\bar{P}\mathbb{E}^2\left(|Z|\right)
+d\mathbb{E}\left(|Z|\right)}{(n-1)P\hat{P}\mathbb{E}^2\left(|Z|\right))+d\mathbb{E}\left(|Z|\right)}\label{equ_34}.\end{align*}
Thus, $|\lambda_{\text{outlier}}|\approx 1$ if and only if $|\bar{P}|=\hat{P}$. By Assumption \ref{ass1} and Lemma \ref{lem51}, the coexistence of trust and mistrust interactions ensures $\bar{P}\neq\hat{P}$, i.e., $|\lambda_{\text{outlier}}|<1$. Thus, the convergence rate $r$ of system (\ref{equ001}) is
\[
\left\{\begin{array}{l}
   -\text{log}(M_{e}), \text{if} \ |n\mathbb{E}|\leq\sqrt{n\mathbb{V}},\\
   -\text{log}(\max(|\lambda_{\text{outlier}}|,M_{e})), \text{otherwise}.
   \end{array}\right.
   \]
\end{proof}

In Theorem \ref{the2}, we have established the quantitative expression of the convergence rate with respect to certain key factors, providing a theoretical foundation for our further analysis of how these factors influence the speed of opinion evolution. In the following subsections, we will present more specific findings in some typical scenarios, including both pure and mixed interactions, which will help us understand the role of complex interaction types in the process of opinion evolution.

\subsection{Convergence rate $r$ for five typical interaction types $(+/+),(-/-),(+/-),(+/0)$, and $(-/0)$}
Based on the signs of $W_{ij}/W_{ji}$, there are five typical types of interactions, namely, mutual trust $(+/+)$, mutual mistrust $(-/-)$, trust$/$mistrust  $(+/-)$, unilateral trust  $(+/0)$, unilateral mistrust $(-/0)$. In the sequel, we will explore the convergence rate of system (\ref{equ001}) with these five interaction types.
  \begin{assumption}\label{ass2}
Suppose $\underline{d}<d\ll n$, where $n$ is large and $\underline{d}=\max\{M_{1}, M_{2}\},$ where \[
M_{1}=\frac{(\sigma^2+\mathbb{E}^2\left(|Z|\right))^2}{\sigma^2\mathbb{E}\left(|Z|\right)}\] and
\[
M_{2}=\frac{2\sigma^4}{\sqrt{2\sigma^2\mathbb{E}^2\left(|Z|\right)-P\mathbb{E}^4\left(|Z|\right)}}+\frac{P\mathbb{E}\left(|Z|\right)}{2}.\]
\end{assumption}

\begin{remark}
 In the real world, an individual's self-confidence level is usually not infinitely low. From a psychological perspective, it is shown that individuals' self-confidence level usually do not drop to zero because people maintain a certain level of self-esteem and self-worth even when faced with failure. Assumption \ref{ass2} implies that people maintain at least a certain level of self-confidence, which is helpful for us to facilitate our theoretical analysis in Theorem \ref{the3}.
\end{remark}
\begin{theorem}\label{the3}\textup{
Under Assumption \ref{ass2}, the following statements hold:}

\noindent
\textup{(i). For the mutual trust $(+/+)$, trust$/$mistrust $(+/-)$, and unilateral trust $(+/0)$ scenarios, the convergence rate of system (\ref{equ001}) is proportional to the population size and network connectivity, while it is inversely proportional to the individuals' self-confidence level.}

\noindent
\textup{(ii). For the mutual mistrust $(-/-)$ and unilateral mistrust $(-/0)$, the conclusions are contrary to those presented in (i).}
\end{theorem}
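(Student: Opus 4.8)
The plan is to reduce Theorem~\ref{the3} to the quantitative rate of Theorem~\ref{the2} by specializing the five proportions to each pure type, and then to obtain the monotonicities by differentiating the resulting closed forms. \textbf{Reduction.} For each pure scenario, set all proportions to $0$ except the relevant one, which is $1$; by (\ref{equ_28})--(\ref{equ_299}) this gives $(\hat{P},\bar{P},P^{*})=(1,1,1)$ for $(+/+)$, $(1,-1,1)$ for $(-/-)$, $(1,0,-1)$ for $(+/-)$, $(\tfrac12,\tfrac12,0)$ for $(+/0)$, and $(\tfrac12,-\tfrac12,0)$ for $(-/0)$, hence explicit forms of $\mathbb{E},\mathbb{V},\mathbb{T},\tau,\lambda_{\text{outlier}}$ via (\ref{equ_12}). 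By the structural-balance analysis in Lemma~\ref{lem7}: $(+/+)$ and $(+/0)$ contain only nonnegative edges, so they are structurally balanced, and for large $n$ (strongly connected) $\rho(W)=1$ with the Perron eigenvalue simple, so $r=-\log\rho_{2}(W)=-\log M_{e}$ (the outlier equals that Perron eigenvalue $1$, so $\rho_{2}(W)$ is the ellipse bound $M_{e}$); $(+/-)$ is structurally unbalanced with $\mathbb{E}=0$, so $|n\mathbb{E}|\le\sqrt{n\mathbb{V}}$, there is no outlier, and again $r=-\log M_{e}$; $(-/-)$ and $(-/0)$ are structurally unbalanced with $|n\mathbb{E}|>\sqrt{n\mathbb{V}}$ and a real outlier $\lambda_{\text{outlier}}\to-1$ while $M_{e}=o(1)$, so $\rho(W)=|\lambda_{\text{outlier}}|$ and $r=-\log|\lambda_{\text{outlier}}|$. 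Since $-\log$ is strictly decreasing, part~(i) is equivalent to: $M_{e}$ decreases in $n$, decreases in $P$, increases in $d$; part~(ii) is equivalent to: $|\lambda_{\text{outlier}}|$ increases in $n$, increases in $P$, decreases in $d$.

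\textbf{Part (ii).} Using the large-$n$ form of the outlier from the proof of Theorem~\ref{the2}, for $(-/-)$ and $(-/0)$ one gets $|\lambda_{\text{outlier}}|\approx\frac{A-d}{A+d}$ with $A=(n-1)P\hat{P}\,\mathbb{E}(|Z|)$ ($\hat{P}=1$, resp.\ $\tfrac12$), and $d\ll n$ gives $A>d>0$. For $f(A,d)=\frac{A-d}{A+d}$ one has $\partial_{A}f=\frac{2d}{(A+d)^{2}}>0$ and $\partial_{d}f=\frac{-2A}{(A+d)^{2}}<0$, while $A$ increases in $n$ and in $P$; hence $|\lambda_{\text{outlier}}|$ increases in $n$, increases in $P$, decreases in $d$, which is statement~(ii). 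The unapproximated $\lambda_{\text{outlier}}$ of (\ref{equ_12}) differs from $f$ by an $O(1/n)$ correction that flips none of these signs for $n$ large, $d>\underline{d}$.

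\textbf{Part (i).} Write $M_{e}=\max(\Delta_{1},\Delta_{2})$ with $\Delta_{1}=\sqrt{n\mathbb{V}}\,(1+\tau)+|W_{ii}-\mathbb{E}|$ and $\Delta_{2}=\sqrt{n\mathbb{V}\,(1-\tau)^{2}+(W_{ii}-\mathbb{E})^{2}}$. Assumption~\ref{ass2} forces $d>M_{1}\ge 4\mathbb{E}(|Z|)>P\bar{P}\,\mathbb{E}(|Z|)$ ($\bar{P}\in\{0,\tfrac12,1\}$ for the trust types), so $W_{ii}-\mathbb{E}>0$ by (\ref{equ_12}) and the absolute value drops. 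Since the maximum of functions all monotone in the same direction inherits that monotonicity, it suffices to show each of $\Delta_{1},\Delta_{2}$ is decreasing in $n$, decreasing in $P$, and increasing in $d$. Substituting the type-specific closed forms from (\ref{equ_12}): the $n$-derivative is the easy case, since each of $\sqrt{n\mathbb{V}}$, $1+\tau$, $W_{ii}-\mathbb{E}$ is a ratio whose denominator is affine-increasing in $n$ with numerator of order at most $\sqrt{n}$, so every $n$-derivative is negative for $n$ large. The $P$- and $d$-derivatives are the real content: as $P$ or $d$ grows, $\sqrt{n\mathbb{V}}\,(1+\tau)$ decreases whereas $W_{ii}-\mathbb{E}$ increases, so after clearing the common denominator $\partial_{d}\Delta_{1}$, $\partial_{d}\Delta_{2}^{2}$, $\partial_{P}\Delta_{1}$, $\partial_{P}\Delta_{2}^{2}$ are each a difference of a negative and a positive contribution; one checks these differences are sign-definite exactly when $d$ exceeds a threshold assembled from $\sigma^{2},\mathbb{E}(|Z|),P,\tau$, and the two constants $M_{1},M_{2}$ in $\underline{d}=\max\{M_{1},M_{2}\}$ are precisely those thresholds (the factor $\sigma^{2}+\mathbb{E}^{2}(|Z|)$ in $M_{1}$ governing the $(+/+)/(+/-)$ bookkeeping, the factor $2\sigma^{2}-P\mathbb{E}^{2}(|Z|)$ inside $M_{2}$ the $(+/0)$ bookkeeping). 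Hence under Assumption~\ref{ass2}, $\Delta_{1},\Delta_{2}$, and so $M_{e}$, are decreasing in $n$ and $P$ and increasing in $d$, and $r=-\log M_{e}$ has the reversed monotonicities in~(i).

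\textbf{Main obstacle.} The crux is the $P$- and $d$-analysis in Part~(i): in the trust scenarios the noise radius $\sqrt{n\mathbb{V}}\,(1+\tau)$ and the diagonal gap $W_{ii}-\mathbb{E}$ respond oppositely to increases in $P$ or $d$, so one must show the \emph{net} derivative keeps a fixed sign uniformly in large $n$ and $P\in(0,1]$ --- this is exactly the information packaged in $d>\underline{d}=\max\{M_{1},M_{2}\}$, and verifying that these particular constants suffice is the heart of the computation. A minor point, settled by structural-balance theory, is confirming in the Reduction step that $(+/+),(+/0)$ lie in the $-\log\rho_{2}$ branch with $\rho_{2}=M_{e}$, and that for $(-/-),(-/0)$ the vanishing ellipse bound $M_{e}=o(1)$ is dominated by the $O(1)$ outlier, so $|\lambda_{\text{outlier}}|$ alone governs $r$.
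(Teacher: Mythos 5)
Your reduction and case split follow the same route as the paper: specialize $(\hat{P},\bar{P},P^{*})$ (your values are correct), identify the governing eigenvalue per scenario (row-stochasticity/Perron eigenvalue $1$ for $(+/+)$ and $(+/0)$, no outlier for $(+/-)$ since $\mathbb{E}=0$, dominant near-$(-1)$ eigenvalue for $(-/-)$ and $(-/0)$), and differentiate. Your Part (ii) is essentially complete and agrees with the paper, which obtains the same eigenvalue exactly as $-1+2(W_{ii}-\mathbb{E})$ via row-stochasticity of $|W-\mathrm{diag}\{2(W_{ii}-\mathbb{E})\}|$ and differentiates it in $n$, $P$, $d$; your $\frac{A-d}{A+d}$ form is the same quantity up to an $O(1/n)$ correction, as you note.

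The genuine gap is Part (i), which is the bulk of the paper's proof and which you acknowledge but do not carry out. You assert that $\partial_{P}\Delta_{1}$, $\partial_{P}\Delta_{2}^{2}$, $\partial_{d}\Delta_{1}$, $\partial_{d}\Delta_{2}^{2}$ are sign-definite ``exactly when $d$ exceeds a threshold'' and that $M_{1}$, $M_{2}$ are ``precisely those thresholds,'' but nothing in your write-up verifies this; the paper's proof consists precisely of these computations (its displays (\ref{equ_4}) and (\ref{equ_36})--(\ref{equ45}), plus the auxiliary function $H$ for the $d$-derivative in the $(+/+)$ case), and it is there that $M_{1}$ arises from the $\Delta_{2}$ $d$-derivative in the $(+/-)$ case and $M_{2}$ from the corresponding inequality in the $(+/0)$ case. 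Without these checks the claimed monotonicities in $P$ and $d$ are unproven assertions, not consequences of Assumption \ref{ass2}. Moreover, your qualitative description of the tension is partly wrong: $W_{ii}-\mathbb{E}=\frac{d-P\bar{P}\mathbb{E}(|Z|)}{(n-1)P\hat{P}\mathbb{E}(|Z|)+d}$ is \emph{decreasing} in $P$ (it increases only in $d$), so the $P$-analysis does not pit the two terms against each other in the way you state; the paper instead handles $P$ through the large-$n$ approximations (e.g.\ $\Delta_{1}\approx\frac{\alpha+\frac{1-P}{\alpha}\mathbb{E}^{2}(|Z|)}{\sqrt{nP}\,\mathbb{E}(|Z|)}$ for $(+/+)$) and explicit sign checks such as $\partial_{P}\bigl(\frac{1-P}{\alpha}\bigr)<0$. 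A smaller omission: for $(+/+)$ you would either need to treat $\Delta_{2}$ as well or, as the paper does, invoke $\tau>0$ and Lemma \ref{lem5} to see that only $\Delta_{1}$ matters. So the skeleton is right and matches the paper, but the heart of Part (i) — the derivative estimates that turn Assumption \ref{ass2} into the stated monotonicities — is missing.
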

\begin{proof}
\textbf{(1). Mutual trust $(+/+)$ interaction.}

In this case, $W$ is a row-stochastic matrix with 1 as its dominant eigenvalue, then $\rho(W)=1$.  By Lemmas \ref{lem4} and \ref{lem7}, the second-largest modulus eigenvalue of $W$ distributes in an ellipse (\ref{equ42}) and determines the convergence rate. Hence, $r=-\text{log}(M_{e})$.

Substituting $P_{+/+}=1$, $P_{-/-}=P_{+/-}=P_{+/0}=P_{-/0}=0$ into (\ref{equ_12}), we obtain
\[
\left\{\begin{array}{l}
\mathbb{E}=\frac{P\mathbb{E}\left(|Z|\right)}{(n-1)P\mathbb{E}\left(|Z|\right)+d}, \\
W_{ii}-\mathbb{E}=\frac{d-P\mathbb{E}\left(|Z|\right)}{(n-1)P\mathbb{E}\left(|Z|\right)+d}, \\
\sqrt{n\mathbb{V}}=\frac{\sqrt{nP}\sqrt{\sigma^2-P\mathbb{E}^2\left(|Z|\right)}}{(n-1)P\mathbb{E}\left(|Z|\right)+d},\\
\tau=\frac{\mathbb{E}^2\left(|Z|\right)-P\mathbb{E}^2\left(|Z|\right)}{\sigma^2-P\mathbb{E}^2\left(|Z|\right)}.
\end{array}\right.
\]
Furthermore, since $\tau>0$, by Lemma \ref{lem5}, \[
r=-\text{log}(\sqrt{n\mathbb{V}}(1+\tau)+|W_{ii}-\mathbb{E}|).\] Moreover, we have

\vspace{-0.5cm}
\begin{align}&\sqrt{n\mathbb{V}}(1+\tau)+|W_{ii}-\mathbb{E}|
\nonumber\\=&\frac{\sqrt{nP}(\alpha^2+(1-P)\mathbb{E}^2(|Z|))+\alpha|d-P\mathbb{E}(|Z|)|}{\alpha(n-1)P\mathbb{E}\left(|Z|\right)+\alpha d}\nonumber\\
\approx &\frac{\sqrt{nP}(\alpha^2+(1-P)\mathbb{E}^2(|Z|))}{\alpha nP\mathbb{E}\left(|Z|\right)}\nonumber \\ \approx & \frac{\alpha+\frac{1-P}{\alpha}\mathbb{E}^2(|Z|)}{\sqrt{nP}\mathbb{E}\left(|Z|\right)}\label{equ_4},\end{align}
where $\alpha=\sqrt{\sigma^2-P\mathbb{E}^2\left(|Z|\right)}$ and $d-P\mathbb{E}\left(|Z|\right)>0$ due to Assumption \ref{ass2}.
Hence, increasing population size accelerates convergence. Moreover, by differentiating $\frac{1-P}{\alpha}$ with respect to $P$, we obtain

\vspace{-0.5cm}
\begin{align*}\frac{\partial (\frac{1-P}{\alpha})}{\partial P}=\frac{\mathbb{E}^2(|Z|)-\sigma^2}{\alpha^{\frac{3}{2}}}<0.\end{align*}
Moreover, we can obtain that stronger network connectivity promotes convergence as well.

Let \[
H:=\frac{d-P\mathbb{E}(|Z|)+\beta}{(n-1)P\mathbb{E}(|Z|)+d},
 \]
 where $\beta=\frac{\sqrt{nP}(\sigma^2+(1-2P)\mathbb{E}^2(|Z|))}{\alpha}$. Then, we have

\vspace{-0.5cm}
\begin{align*}\frac{\partial H}{\partial d}=\frac{nP\mathbb{E}(|Z|)-\beta}{((n-1)P\mathbb{E}(|Z|)+d)^2}>0.\end{align*}
Thus, higher self-confidence level $d$ decreases convergence rate.
\textbf{(2). Trust$/$mistrust $(+/-)$ interaction.}

In this case, all eigenvalues distribute in the ellipse (\ref{equ42}), thus $r=-\text{log}(M_{e})$. Then, substituting $P_{+/-}=1$, $P_{+/+}=P_{-/-}=P_{+/0}=P_{-/0}=0$ into (\ref{equ_12}), we have
\[
\left\{\begin{array}{l}
\mathbb{E}=0,\\
W_{ii}-\mathbb{E}=\frac{d}{(n-1)P\mathbb{E}\left(|Z|\right)+d},\\
\sqrt{n\mathbb{V}}=\frac{\sigma\sqrt{nP}}{(n-1)P\mathbb{E}\left(|Z|\right)+d},\\
\tau=\frac{-\mathbb{E}^2\left(|Z|\right)}{\sigma^2}.
\end{array}\right.
\]
In the sequel, we first compute $\Delta_{1}$ and obtain

\vspace{-0.5cm}
\begin{align*}&\sqrt{n\mathbb{V}}(1+\tau)+|W_{ii}-\mathbb{E}|\\=&\frac{\sqrt{nP}(\sigma^2-\mathbb{E}^2\left(|Z|\right))+\sigma d}{\sigma((n-1)P\mathbb{E}\left(|Z|\right)+d)}\nonumber\\ \approx& \frac{\sqrt{nP}(\sigma^2-\mathbb{E}^2\left(|Z|\right))}{\sigma(n-1)P\mathbb{E}\left(|Z|\right)}\nonumber\\ \approx & \frac{\sigma^2-\mathbb{E}^2\left(|Z|\right)}{\sqrt{nP}\sigma\mathbb{E}\left(|Z|\right)},\end{align*}
and

\vspace{-0.5cm}
\begin{align*}&\frac{\partial (\sqrt{n\mathbb{V}}(1+\tau)+|W_{ii}-\mathbb{E}|)}{\partial d}\\=&\frac{(n-1)P\mathbb{E}\left(|Z|\right)\sigma^2-\sqrt{nP}\sigma(\sigma^2-\mathbb{E}^2\left(|Z|\right))+\sigma^2d-\sigma^2 d}{\sigma^2((n-1)P\mathbb{E}\left(|Z|\right)+d)^2}\\ \approx &\frac{nP\mathbb{E}\left(|Z|\right)\sigma^2-\sqrt{nP}\sigma(\sigma^2-\mathbb{E}^2\left(|Z|\right))}{\sigma^2((n-1)P\mathbb{E}\left(|Z|\right)+d)^2},
\\ = &\frac{\sqrt{nP}(\sqrt{nP}\mathbb{E}\left(|Z|\right)\sigma-\sigma^2+\mathbb{E}^2\left(|Z|\right))}
{\sigma((n-1)P\mathbb{E}\left(|Z|\right)+d)^2}>0.\end{align*}
Next, following the similar analysis for $\Delta_{1}$, we consider $\Delta_{2}$ and have

\vspace{-0.5cm}
\begin{align*}&n\mathbb{V}(1-\tau)^2+(W_{ii}-\mathbb{E})^2\\=&\frac{nP(\sigma^2+\mathbb{E}^2\left(|Z|\right))^2+\sigma^2 d^2}{\sigma^2((n-1)P\mathbb{E}\left(|Z|\right)+d)^2},
\nonumber\\ \approx &\frac{nP(\sigma^2+\mathbb{E}^2\left(|Z|\right))^2+\sigma^2 d^2}{\sigma^2(nP\mathbb{E}\left(|Z|\right)+d)^2}
\nonumber\\ \approx &\frac{(\sigma^2+\mathbb{E}^2\left(|Z|\right))^2}{nP\mathbb{E}^2\left(|Z|\right)\sigma^2},\end{align*}
and

\vspace{-0.5cm}
\begin{small}
\begin{align}&\frac{\partial (n\mathbb{V}(1-\tau)^2+(W_{ii}-\mathbb{E})^2)}{\partial d}\nonumber\\=&\frac{2d\sigma^2((n-1)P\mathbb{E}\left(|Z|\right)+d)-2(nP(\sigma^2+\mathbb{E}^2\left(|Z|\right))^2+\sigma^2 d^2)}{\sigma^2((n-1)P\mathbb{E}\left(|Z|\right)+d)^3}\nonumber\\ \approx &\frac{2nP\mathbb{E}\left(|Z|\right)d\sigma^2-2nP(\sigma^2+\mathbb{E}^2\left(|Z|\right))^2}{\sigma^2((n-1)P\mathbb{E}\left(|Z|\right)+d)^3}
\nonumber\\ = &\frac{2nP(\mathbb{E}\left(|Z|\right)d\sigma^2-(\sigma^2+\mathbb{E}^2\left(|Z|\right))^2)}
{\sigma^2((n-1)P\mathbb{E}\left(|Z|\right)+d)^3}>0,\label{equ44}\end{align}\end{small}
where the inequality (\ref{equ44}) holds according to Assumption \ref{ass2}. In summary, we can obtain the same conclusions as in case (1).

\textbf{(3). Unilateral trust $(+/0)$ interaction.}

In this case, $W$ is a row-stochastic matrix with 1 as its dominant eigenvalue and thereby the second-largest modulus eigenvalue of $W$ distributed in an ellipse determines the convergence rate. By Theorem \ref{the2}, $r=-\text{log}(M_{e})$. Then, substituting $P_{+/0}=1$, $P_{+/+}=P_{-/-}=P_{+/-}=P_{-/0}=0$ into (\ref{equ_12}), we have
\[
\left\{\begin{array}{l}
\mathbb{E}=\frac{\mathbb{E}\left(S_{ij}\right)}{d+C_{m}}=\frac{P\mathbb{E}\left(|Z|\right)}{2d+(n-1)P\mathbb{E}\left(|Z|\right)}, \\
W_{ii}-\mathbb{E}=\frac{2d-P\mathbb{E}\left(|Z|\right)}{2d+(n-1)P\mathbb{E}\left(|Z|\right)},\\
\sqrt{n\mathbb{V}}=\frac{\sqrt{2nP\sigma^2-nP^2\mathbb{E}^2\left(|Z|\right)}}{2d+(n-1)P\mathbb{E}\left(|Z|\right)},\\
\tau=\frac{-P\mathbb{E}^2\left(|Z|\right)}{2\sigma^2-P\mathbb{E}^2\left(|Z|\right)}.
\end{array}\right.
\]
Similarly, we first compute $\Delta_{1}$ and obtain

\vspace{-0.5cm}
\begin{align}&\sqrt{n\mathbb{V}}(1+\tau)+|W_{ii}-\mathbb{E}|
\nonumber\\=&\frac{\sqrt{nP}(2\sigma^2-2P\mathbb{E}^2\left(|Z|\right))
+\zeta_{1}\zeta_{2}}
{\zeta_{1}\zeta_{3}}
\nonumber\\ \approx&\frac{2\sigma^2-2P\mathbb{E}^2\left(|Z|\right)}{\zeta_{1}\sqrt{nP}\mathbb{E}\left(|Z|\right)
)}\label{equ_36},\end{align}
where \[\zeta_{1}=\sqrt{2\sigma^2-P\mathbb{E}^2\left(|Z|\right)},\] \[\zeta_{2}=2d-P\mathbb{E}\left(|Z|\right),\]
\[\zeta_{3}=2d+(n-1)P\mathbb{E}\left(|Z|\right).\]
Moreover, we have

\vspace{-0.5cm}
\begin{align}&\frac{\partial (\displaystyle\frac{2\sigma^2-2P\mathbb{E}^2\left(|Z|\right)}{\zeta_{1}})}{\partial P}=\frac{-6\mathbb{E}^2\left(|Z|\right)\sigma^2+2P\mathbb{E}^4\left(|Z|\right)}{\zeta_{1}^3}<0,\label{equ_37}
\end{align}
and

\vspace{-0.5cm}
\begin{align}&\frac{\partial (\sqrt{n\mathbb{V}}(1+\tau)+|W_{ii}-\mathbb{E}|)}{\partial d}\nonumber\\ \approx &\frac{2\zeta_{1} nP\mathbb{E}\left(|Z|\right)-\sqrt{nP}(4\sigma^2-4P\mathbb{E}^2\left(|Z|\right))}
{\zeta_{1}\zeta_{3}^2}
\nonumber\\ = &\frac{\sqrt{nP}(2\zeta_{1} \sqrt{nP}\mathbb{E}\left(|Z|\right)-4\sigma^2+4P\mathbb{E}^2\left(|Z|\right))}
{\zeta_{1}\zeta_{3}^2}>0\label{equ_39}.\end{align}
Then, to analyze $\Delta_{2}$, we have

\vspace{-0.5cm}
\begin{align}&n\mathbb{V}(1-\tau)^2+(W_{ii}-\mathbb{E})^2 \nonumber
\\ =&\frac
{4nP\sigma^4+\zeta_{1}\zeta_{2}^2}{\zeta_{1}\zeta_{3}^2}
\nonumber\\ \approx &\frac{4nP\sigma^4}{\zeta_{1} (nP\mathbb{E}\left(|Z|\right))^2}\nonumber\\ \approx &
\frac{4\sigma^4}{\zeta_{1} nP\mathbb{E}^2\left(|Z|\right)}\label{equ_40}.\end{align}
Furthermore, we have

\vspace{-0.5cm}
\begin{align}\frac{\partial (P\zeta_{1})}{\partial P}=\frac{4\sigma^2-3P\mathbb{E}^2\left(|Z|\right)}{2\zeta_{1}}>0,\label{equ_41}
\end{align}
and

\vspace{-0.5cm}
\begin{align}&\frac{\partial (n\mathbb{V}(1-\tau)^2+(W_{ii}-\mathbb{E})^2)}{\partial d}\nonumber\\=&\frac{4\zeta_{1}\zeta_{3}(2d-P\mathbb{E}\left(|Z|\right))
-16nP\sigma^4-4\zeta_{1}\zeta_{2}^2}
{\zeta_{1}\zeta_{3}^3}
\nonumber\\ \approx& \frac{4nP\zeta_{1}\zeta_{2}\mathbb{E}\left(|Z|\right)
-16nP\sigma^4}
{\zeta_{1} (nP\mathbb{E}\left(|Z|\right))^3}>0,
\label{equ45}\end{align}
where the above inequality (\ref{equ45}) holds by Assumption \ref{ass2}. According to (\ref{equ_36})-(\ref{equ45}), $r$ decreases with the increasing of $n$ and $P$, and higher  self-confidence level  $d$ will decrease convergence rate.

Based on the above analysis in \textbf{(1)}-\textbf{(3)}, for the $(+/+)$, $(+/-)$ and $(+/0)$ scenarios, the convergence rate $r$ is proportional to both the population size and the network connectivity. Conversely, it is inversely proportional to the individuals' self-confidence level.



\textbf{(4). Mutual mistrust $(-/-)$ interaction.}

In this case, $|W-\text{diag}\{2(W_{ii}-\mathbb{E})\}|$ is a row-stochastic matrix and $-1$ is the dominant eigenvalue of $W-\text{diag}\{2(W_{ii}-\mathbb{E})\}$. By Lemma \ref{lem7}, $-1+2(W_{ii}-\mathbb{E})$ is an eigenvalue of $W$ and determines the convergence rate. Since $P_{-/-}=1$,
we have
$\bar{P}=-1$ and $\hat{P}=1.$ Substituting them into (\ref{equ_12}), we have

\vspace{-0.5cm} \begin{align}\label{equ_251}-1+2(W_{ii}-\mathbb{E})=\frac{d-(n-3)P\mathbb{E}\left(|Z|\right)}{d+(n-1)P\mathbb{E}\left(|Z|\right)}.\end{align} Since $|-1+2(W_{ii}-\mathbb{E})|\gg M_{e}$, we have $r=-\text{log}(|-1+2(W_{ii}-\mathbb{E})|).$ Moreover, $|-1+2(W_{ii}-\mathbb{E})|=1-2(W_{ii}-\mathbb{E})$.


In order to further study the effect of $n$, $d$ and $P$ on convergence rate, differentiating (\ref{equ_251}) with respect to $n$, $P$ and $d$ respectively, we obtain

\vspace{-0.5cm}
\begin{align*}\frac{\partial (1-2(W_{ii}-\mathbb{E}))}{\partial n}&=\frac{2P^2\mathbb{E}^2(|Z|)+2P\mathbb{E}(|Z|)d}{{(d+(n-1)P\mathbb{E}(|Z|))}^2}>0,\end{align*}

\vspace{-0.5cm}
\begin{align*}\frac{\partial (1-2(W_{ii}-\mathbb{E}))}{\partial P}&=\frac{(2n-4)\mathbb{E}(|Z|)d}{{(d+(n-1)P\mathbb{E}(|Z|))}^2}>0,\end{align*}

\vspace{-0.5cm}
\begin{align*}\frac{\partial (1-2(W_{ii}-\mathbb{E}))}{\partial d}&=\frac{-(2n-4)d\mathbb{E}(|Z|)}{{(d+(n-1)P\mathbb{E}(|Z|))}^2}<0,\end{align*}
Thus, the increase of population size $n$ and network connectivity $P$ promote the convergence rate, while the improvement of self-confidence level $d$ decreases the convergence rate.

%


\textbf{(5). Unilateral  mistrust $(-/0)$ interaction.}

In this case, $|W-\text{diag}\{2(W_{ii}-\mathbb{E})\}|$ is a row-stochastic matrix and  $-1$ is the dominant eigenvalue of matrix $W-\text{diag}\{2(W_{ii}-\mathbb{E})\}$. By Lemma \ref{lem7}, $-1+2(W_{ii}-\mathbb{E})$ is an eigenvalue of $W$ and determines the convergence rate. Since $P_{-/0}=1$, we have
$\bar{P}=-\frac{1}{2}$ and $\hat{P}=\frac{1}{2}.$ Substituting them into (\ref{equ_12}), we have

\vspace{-0.5cm}
\begin{align*}-1+2(W_{ii}-\mathbb{E})=\frac{2d-(n-3)P\mathbb{E}\left(|Z|\right)}{2d+(n-1)P\mathbb{E}\left(|Z|\right)}.\end{align*} Since $|-1+2(W_{ii}-\mathbb{E})|\gg M_{e}$, we have $r=-
\text{log}(|-1+2(W_{ii}-\mathbb{E})|)$ and

\vspace{-0.5cm}
\begin{align}1-2(W_{ii}-\mathbb{E})=\frac{(n-3)P\mathbb{E}\left(|Z|\right)-2d}{2d+(n-1)P\mathbb{E}\left(|Z|\right)}\label{equ_19}.\end{align}

Differentiating (\ref{equ_19}) with respect to $n$, $P$ and $d$ respectively, we obtain that the following inequalities hold

\vspace{-0.5cm}
\begin{align*}\frac{\partial (1-2(W_{ii}-\mathbb{E}))}{\partial n}&=\frac{2P^2\mathbb{E}^2(|Z|)+4Pd\mathbb{E}(|Z|)}{{(d+(n-1)P\mathbb{E}(|Z|))}^2}>0,\end{align*}

\vspace{-0.5cm}
\begin{align*}\frac{\partial (1-2(W_{ii}-\mathbb{E}))}{\partial P}&=\frac{(4n-8)d\mathbb{E}(|Z|)}{{(d+(n-1)P\mathbb{E}(|Z|))}^2}>0,\end{align*}

\vspace{-0.5cm}
\begin{align*}\frac{\partial (1-2(W_{ii}-\mathbb{E}))}{\partial d}&=\frac{(-4n+8)Pd\mathbb{E}(|Z|)}{{(d+(n-1)P\mathbb{E}(|Z|))}^2}<0.\end{align*}
 Hence, $r$ decreases with the increasing of network connectivity $P$ and population size $n$, but increases with the higher
 self-confidence level $d$.

 Based on the above analysis in \textbf{(4)} and \textbf{(5)}, for the $(-/-)$ and $(-/0)$ scenarios, the convergence rate $r$ is inversely proportional to both the population size and the network connectivity. On the contrary, it is proportional to the individuals' self-confidence level.
 \end{proof}

 From Theorems \ref{the1}-\ref{the3}, the impact that the $(-/-)$ and $(-/0)$ have on the convergence rate is completely opposite to that of other interaction types. To further explore the role of mistrust in the process of opinion evolution, we proceed to analyze two mixed scenarios: $(+/+,-/-)$ and $(-/-,-/0)$.

\subsection{Mixture interaction $(+/+,-/-)$}
In this section, we focus on studying the effect of mutual trust $(+/+)$ on the convergence rate of system (\ref{equ001}) by considering the mixture of mutual trust $(+/+)$ and mutual mistrust $(-/-)$.
\begin{theorem}\label{the4}
 \textup{For the system (\ref{equ001}) with mixture interactions $(+/+,-/-)$, there exist two small constants $\xi_{1},\xi_{2}>0$ such that the following statements hold:}
  \\
    \noindent
  \textup{ (i). When $P_{+/+}$ is in the intervals $[0, 0.5-\xi_{1})$ or $(0.5,0.5+\xi_{2}]$, the convergence rate  of system (\ref{equ001}) is proportional to $P_{+/+}$.}
  \\
  \noindent
   \textup{(ii). When $P_{+/+}$ is in the intervals $[0.5-\xi_{1},0.5]$ or $(0.5+\xi_{2},1]$, the convergence rate  of  system (\ref{equ001}) is inversely proportional to $P_{+/+}$.}
\end{theorem}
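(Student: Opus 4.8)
The plan is to specialise Theorem~\ref{the2} to $P_{+/-}=P_{+/0}=P_{-/0}=0$, $P_{-/-}=1-P_{+/+}$, and then follow, as $P_{+/+}$ runs over $[0,1]$, which of the two candidate dominant moduli --- the bulk radius $M_e$ and the outlier modulus $|\lambda_{\text{outlier}}|$ --- controls the rate. Under this choice (\ref{equ_28})--(\ref{equ_299}) give $\hat P=1$, $P^{*}=1$ and $\bar P=2P_{+/+}-1\in[-1,1]$, so (\ref{equ_12}) turns every quantity of Theorem~\ref{the2} into an explicit function of $\bar P$; in particular $\sqrt{n\mathbb V}(1+\tau)=\dfrac{\sqrt{nP}}{(n-1)P\mathbb E(|Z|)+d}\,g(\bar P^{2})$ with $g(v)=\dfrac{\sigma^{2}+\mathbb E^{2}(|Z|)-2Pv\,\mathbb E^{2}(|Z|)}{\sqrt{\sigma^{2}-Pv\,\mathbb E^{2}(|Z|)}}$, and (as in the proof of Theorem~\ref{the3}) the term $|W_{ii}-\mathbb E|$ is of lower order, so $M_e\approx\sqrt{n\mathbb V}(1+\tau)$. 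The outlier is present exactly when $|\bar P|>\bar P_{c}:=\sigma/(\mathbb E(|Z|)\sqrt{(n+1)P})$ --- a threshold of order $n^{-1/2}$ --- and after discarding negligible terms equals $\lambda_{\text{outlier}}\approx\dfrac{d+(n-1)P\bar P\,\mathbb E(|Z|)}{d+(n-1)P\,\mathbb E(|Z|)}$, i.e.\ the common row sum of $W$. Since for $P_{+/+}\in(0,1)$ the network is, with probability tending to $1$, structurally unbalanced (every CSCC carries a negative cycle), Theorem~1 of \cite{Xia:16} gives $\rho(W)<1$, so Lemma~\ref{lem7} yields $r=-\log\rho(W)=-\log\max\!\bigl(M_e,|\lambda_{\text{outlier}}|\bigr)$.

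The two monotonicity facts on which everything rests are: (a) after the substitution $w=\sigma^{2}-P\bar P^{2}\mathbb E^{2}(|Z|)$, $g$ becomes $(\mathbb E^{2}(|Z|)-\sigma^{2}+2w)/\sqrt{w}$, whose $w$-derivative is positive because $\sigma^{2}\ge\mathbb E^{2}(|Z|)$ (Jensen), and since $dw/d\bar P^{2}<0$ this gives $g'<0$; hence $M_e$ is strictly decreasing in $\bar P^{2}=(2P_{+/+}-1)^{2}$, so it is strictly increasing in $P_{+/+}$ on $(0,\tfrac12)$, strictly decreasing on $(\tfrac12,1)$, and maximal at $P_{+/+}=\tfrac12$; and (b) $\lambda_{\text{outlier}}$, well approximated by the common row sum above, satisfies $\partial\lambda_{\text{outlier}}/\partial P_{+/+}=\dfrac{2(n-1)P\,\mathbb E(|Z|)}{d+(n-1)P\,\mathbb E(|Z|)}>0$, so it rises strictly from $\approx-1$ at $P_{+/+}=0$, through $0$ at $P^{0}=\tfrac12-\tfrac{d}{2(n-1)P\mathbb E(|Z|)}$, to $\approx1$ at $P_{+/+}=1$; consequently $|\lambda_{\text{outlier}}|$ is strictly decreasing on $[0,P^{0})$ and strictly increasing on $(P^{0},1]$.

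Next I would locate the crossovers: on $(0,\tfrac12)$ the function $M_e-|\lambda_{\text{outlier}}|$ is strictly increasing by (a)--(b), it is negative near $0$ (there $M_e\sim n^{-1/2}$ while $|\lambda_{\text{outlier}}|\to1$) and positive near $\tfrac12$ (there $|\lambda_{\text{outlier}}|\to0$), so by continuity it has a unique zero, which defines $\xi_{1}>0$ via $P_{+/+}=\tfrac12-\xi_{1}$; the mirror-image argument on $(\tfrac12,1)$ defines $\xi_{2}>0$. At the crossover $2\xi_{1}=|\bar P|\approx|\lambda_{\text{outlier}}|=M_e$ is $O(n^{-1/2})$, and $M_e/\bar P_{c}\to(\sigma^{2}+\mathbb E^{2}(|Z|))/\sigma^{2}>1$, so in fact $|\bar P|>\bar P_{c}$ on all of $[0,\tfrac12-\xi_{1})$ (the outlier really is present there) and $\tfrac12-\xi_{1}<P^{0}<\tfrac12$; likewise for $\xi_{2}$. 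This is the delicate step --- near $P_{+/+}=\tfrac12$ the two moduli are both of order $n^{-1/2}$, so one must carry out the comparison with the leading-order expressions and check that the subleading $|W_{ii}-\mathbb E|$ term and the $O(1/n)$ corrections to $\lambda_{\text{outlier}}$ do not overturn the monotonicities or the uniqueness of the sign change; everything else is routine differentiation with $d\ll n$.

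With the four regions fixed, the claims follow by taking $-\log$ of the governing modulus: on $[0,\tfrac12-\xi_{1})$, $\rho(W)=|\lambda_{\text{outlier}}|$ is strictly decreasing in $P_{+/+}$, so $r$ is strictly increasing --- proportional to $P_{+/+}$; on $[\tfrac12-\xi_{1},\tfrac12]$, $\rho(W)=M_e$ is strictly increasing in $P_{+/+}$ (since $\bar P^{2}$ decreases and $g$ is decreasing), so $r$ is strictly decreasing --- inversely proportional; on $(\tfrac12,\tfrac12+\xi_{2}]$, $\rho(W)=M_e$ is strictly decreasing in $P_{+/+}$ ($\bar P^{2}$ now increases), so $r$ is strictly increasing --- proportional; on $(\tfrac12+\xi_{2},1)$, $\rho(W)=\lambda_{\text{outlier}}>0$ is strictly increasing, so $r$ is strictly decreasing --- inversely proportional, with $P_{+/+}=1$ being the pure $(+/+)$ case already treated in Theorem~\ref{the3}. (In passing this shows $r$ has a local minimum at $P_{+/+}=\tfrac12$ and attains its largest values near $P_{+/+}=\tfrac12\pm\xi_{i}$, which is the abstract's assertion that consensus is fastest when the two types occur in roughly equal proportion.) The only other place an inequality is genuinely used is $\sigma^{2}\ge\mathbb E^{2}(|Z|)$ in step~(a).
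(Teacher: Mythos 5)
Your proposal is correct and follows essentially the same route as the paper: specialize the formulas of Theorem \ref{the2} (via (\ref{equ_12})) to $\hat P=1$, $P^{*}=1$, $\bar P=2P_{+/+}-1$, show that away from $P_{+/+}=\tfrac12$ the outlier governs with $|\lambda_{\text{outlier}}|\approx|\bar P|$ (your row-sum form keeps the $d$-correction the paper drops), and near $\tfrac12$ show the ellipse edge $M_e$ is decreasing in $\bar P^{2}$ by the same differentiation argument using $\sigma^{2}\geq\mathbb{E}^{2}(|Z|)$. You are in fact somewhat more explicit than the paper — giving the outlier-existence threshold $|n\mathbb{E}|>\sqrt{n\mathbb{V}}$ in terms of $\bar P$ and defining $\xi_{1},\xi_{2}$ as the $M_e$-versus-$|\lambda_{\text{outlier}}|$ crossovers via continuity — whereas the paper simply asserts the existence of $\xi_{1},\xi_{2}$; the monotonicity conclusions in all four regimes coincide with the paper's.
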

\begin{proof}
 Substituting $\hat{P}=1$, $\bar{P}=2P_{+/+}-1$, and $P_{+/-}=P_{+/0}=P_{-/0}=0$ into (\ref{equ_12}), we obtain
  \begin{equation*}\left\{\begin{array}{l}
\mathbb{E}=\frac{\mathbb{E}\left(S_{ij}\right)}{C_{m}+d}=\frac{P\bar{P}\mathbb{E}\left(|Z|\right)}{(n-1)P\mathbb{E}\left(|Z|\right)+d}, \\
W_{ii}-\mathbb{E}=\frac{d-P\bar{P}\mathbb{E}\left(|Z|\right)}{(n-1)P\mathbb{E}\left(|Z|\right)+d}\\
\sqrt{n\mathbb{V}}=\frac{\sqrt{nP\sigma^2-nP^2\bar{P}^2\mathbb{E}^2\left(|Z|\right)}}{(n-1)P\mathbb{E}\left(|Z|\right)+d},\\
\tau=\frac{\mathbb{E}^2\left(|Z|\right)-P\bar{P}^2\mathbb{E}^2\left(|Z|\right)}{\sigma^2-P\bar{P}^2\mathbb{E}^2\left(|Z|\right)},\\
\lambda_{\text{outlier}}=\frac{\mathbb{E}^2\left(|Z|\right)+(n-2)P\bar{P}^2\mathbb{E}^2\left(|Z|\right)+d\bar{P}\mathbb{E}\left(|Z|\right)}{\bar{P}\mathbb{E}\left(|Z|\right)((n-1)P\mathbb{E}\left(|Z|\right)+d)}.
\end{array}\right.\end{equation*}
 From Theorem \ref{the2}, when $P_{+/+}\in[0, 0.5-\xi_{1})\cup(0.5+\xi_{2}, 1]$, there exist two small constants $\xi_{1},\xi_{2}>0$ such that $r=-\text{log}(|\lambda_{\text{outlier}}|)$. Moreover, $|\lambda_{\text{outlier}}|\approx |\bar{P}|$. Thus, when $P_{+/+}\in[0, 0.5-\xi_{1}]$, a larger $P_{+/+}$ can promote convergence rate $r$, while when $P_{+/+}\in[0.5+\xi_{2}, 1]$, conclusions are just the opposite.

 Moreover, when $P_{+/+}\in[0.5-\xi_{1},0.5]\cup[0.5,0.5+\xi_{2}]$, we have \[
 r=-\text{log}(\sqrt{n\mathbb{V}}(1+\tau)+|W_{ii}-\mathbb{E}|)\]
  and \begin{align*} &\sqrt{n\mathbb{V}}(1+\tau)+|W_{ii}-\mathbb{E}|\\ =
&\frac{\sqrt{nP}(\theta+\mathbb{E}^2\left(|Z|\right)-P\bar{P}^2\mathbb{E}^2\left(|Z|\right))+\theta|d-P\bar{P}\mathbb{E}\left(|Z|\right)|}
{(n-1)P\theta\mathbb{E}\left(|Z|\right)}
\\ \approx&\frac{\theta+\mathbb{E}^2\left(|Z|\right)-P\bar{P}^2\mathbb{E}^2\left(|Z|\right)}
{\sqrt{nP}\theta \mathbb{E}\left(|Z|\right)}
\\=&\frac{1+\frac{\mathbb{E}^2\left(|Z|\right)-P\bar{P}^2\mathbb{E}^2\left(|Z|\right)}{\theta}}
{\sqrt{nP}\mathbb{E}\left(|Z|\right)},\end{align*}
where $\theta=\sqrt{\sigma^2-P\bar{P}^2\mathbb{E}^2\left(|Z|\right)}$. Furthermore, since \begin{align*}&\frac{\partial (\frac{\mathbb{E}^2\left(|Z|\right)-P\bar{P}^2\mathbb{E}^2\left(|Z|\right)}{\theta})}{\partial (\bar{P}^2)}\\=&
 \frac{-2P\mathbb{E}^2\left(|Z|\right)\sigma^2+P^2\bar{P}^2\mathbb{E}^4\left(|Z|\right)
 +P\mathbb{E}^4\left(|Z|\right)}{2\theta^3}<0,\end{align*} the convergence rate $r$ monotonically decreases and increases in intervals $[0.5-\xi_{1},0.5]$ and $(0.5,0.5+\xi_{2}]$, respectively.
 \end{proof}
\begin{remark}
\textup{From Theorem \ref{the4}, more trust interaction types $((+/+)$ and $(+/0))$ do not necessarily accelerate convergence. In ecology, this phenomenon is analogous to mutualism among multiple species, while in sociology, it is indicative of social balance.}
\end{remark}
 \subsection{Mixture interaction $(-/-,-/0)$}
In this section, we examine the effect of increasing mutual interaction in system (\ref{equ001}) by considering the mixture $(-/-,-/0)$ of mutual mistrust $(-/-)$ and unilateral $(-/0)$.
\begin{theorem}\label{the5}\textup{
For the system (\ref{equ001}) with mixture interaction $(-/-,-/0)$, the convergence rate is inversely proportional to the proportion of mutual mistrust $(-/-)$.}
\end{theorem}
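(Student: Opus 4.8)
The plan is to treat the $(-/-,-/0)$ mixture by the same argument used for the two pure mistrust scenarios, cases \textbf{(4)} and \textbf{(5)} in the proof of Theorem~\ref{the3}, since it is again one of the degenerate cases in which Assumption~\ref{ass1} fails. First I would substitute $P_{+/+}=P_{+/-}=P_{+/0}=0$ and $P_{-/0}=1-P_{-/-}$ into (\ref{equ_28})--(\ref{equ_299}), which gives $\hat P=\tfrac{1+P_{-/-}}{2}$, $\bar P=-\tfrac{1+P_{-/-}}{2}=-\hat P$, and $P^{*}=P_{-/-}$. By Lemma~\ref{lem51} this is exactly the situation $\hat P=-\bar P$, that is, Assumption~\ref{ass1} does not hold, and concretely it reflects that every off-diagonal entry $W_{ij}$ ($i\neq j$) is non-positive.

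Next I would identify the eigenvalue of $W$ that governs the rate, just as in cases \textbf{(4)}--\textbf{(5)}. Since $W_{ij}\le 0$ for $i\neq j$, $W_{ii}>0$ is a constant, and the mean off-diagonal entry satisfies $\mathbb{E}<0$, the shifted matrix $W-\operatorname{diag}\{2(W_{ii}-\mathbb{E})\}$ has all entries non-positive and (for large $n$, using $|S|_i\approx C_{\mathrm m}$) its absolute-value matrix is row-stochastic; by Perron--Frobenius its dominant eigenvalue is then $-1$, so $-1+2(W_{ii}-\mathbb{E})$ is an eigenvalue of $W$. Using the large-$n$ bound $M_{e}<1$ from Theorem~\ref{the2}, one checks that $|{-1+2(W_{ii}-\mathbb{E})}|$ dominates $M_{e}$ as well as $|\lambda_{\text{outlier}}|$ (which in this degenerate case lies near $-1$ and agrees with $-1+2(W_{ii}-\mathbb{E})$ to leading order), so by Lemma~\ref{lem7} the convergence rate is $r=-\log\bigl(|{-1+2(W_{ii}-\mathbb{E})}|\bigr)$. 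Because $2(W_{ii}-\mathbb{E})\to 0$ as $n\to\infty$, the quantity $-1+2(W_{ii}-\mathbb{E})$ is negative, so $|{-1+2(W_{ii}-\mathbb{E})}|=1-2(W_{ii}-\mathbb{E})$.

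What remains is a short one-variable monotonicity check. Writing $q=\tfrac{1+P_{-/-}}{2}\in[\tfrac{1}{2},1]$ and $a=P\,\mathbb{E}(|Z|)$, substitution of $\hat P=-\bar P=q$ into (\ref{equ_12}) gives $W_{ii}-\mathbb{E}=\dfrac{d+aq}{(n-1)aq+d}$, hence
\[
1-2(W_{ii}-\mathbb{E})=\frac{(n-3)aq-d}{(n-1)aq+d},
\]
which is positive for large $n$ (as $d\ll n$). Differentiating the right-hand side in $q$,
\[
\frac{\partial}{\partial q}\!\left(\frac{(n-3)aq-d}{(n-1)aq+d}\right)=\frac{(2n-4)\,a\,d}{\bigl((n-1)aq+d\bigr)^{2}}>0,
\]
so $|{-1+2(W_{ii}-\mathbb{E})}|$ increases with $q$ and therefore with $P_{-/-}$. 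Consequently $r=-\log\bigl(|{-1+2(W_{ii}-\mathbb{E})}|\bigr)$ is a decreasing function of $P_{-/-}$, which is precisely the asserted inverse proportionality.

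The step I expect to be the main obstacle is not this computation but the justification in the second paragraph: one has to be careful that Assumption~\ref{ass1} fails here (so Theorem~\ref{the2} is not directly applicable), establish the near-row-stochasticity of $|W-\operatorname{diag}\{2(W_{ii}-\mathbb{E})\}|$ up to the $o(1)$ fluctuations of the row sums $|S|_i$ (equivalently, that $\mathcal{G}(W)$ is structurally unbalanced so that $\rho(W)<1$ by Theorem~1 in \cite{Xia:16}), and verify that neither the bulk ellipse radius $M_{e}$ nor the outlier eigenvalue overtakes $|{-1+2(W_{ii}-\mathbb{E})}|$ --- all of which follows from the large-$n$ estimates already developed in Theorems~\ref{the2} and \ref{the3}.
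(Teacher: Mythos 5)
Your proposal is correct and follows essentially the same route as the paper: it reduces the $(-/-,-/0)$ mixture to the eigenvalue $-1+2(W_{ii}-\mathbb{E})$ identified in cases (4)--(5) of Theorem \ref{the3}, obtains the same expression $1-2(W_{ii}-\mathbb{E})=\frac{(n-3)P\hat{P}\mathbb{E}(|Z|)-d}{(n-1)P\hat{P}\mathbb{E}(|Z|)+d}$ with $\hat{P}=-\bar{P}=\tfrac{1}{2}(1+P_{-/-})$, and concludes by the same positive-derivative computation. The extra care you take in justifying why this eigenvalue (rather than $M_{e}$ or the bulk ellipse) governs the rate is consistent with, and slightly more explicit than, the paper's appeal to Theorem \ref{the3}.
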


\begin{proof}
For the mixture interaction $(-/-,-/0)$, we have \[
\hat{P}=-\bar{P}=\frac{1}{2}(P_{-/-}+1).\] By Theorem \ref{the3}, we can obtain
\[
r=-\text{log}(1-2(W_{ii}-\mathbb{E})),
\] where

\vspace{-0.5cm} \begin{align}1-2(W_{ii}-\mathbb{E})=\frac{(n-3)P\hat{P}\mathbb{E}\left(|Z|\right)-d}
{(n-1)P\hat{P}\mathbb{E}\left(|Z|\right)+d}.\label{equ_17}\end{align}

Then, differentiating (\ref{equ_17}) with respect to $\hat{P}$ yields

\vspace{-0.5cm}
\begin{align*}\frac{\partial (1-2(W_{ii}-\mathbb{E}))}{\partial \hat{P}}&=\frac{(2n-4)Pd\mathbb{E}(|Z|)}{{(d+(n-1)P\hat{P}\mathbb{E}(|Z|))}^2}>0.\end{align*}
Thus, the convergence rate is inversely proportional to the proportion of $(-/-)$.
\end{proof}

\begin{remark}\textup{
In this paper, we introduce a novel framework for analyzing the convergence rate of the discrete-time Altafini model and identify the key factors that influence this rate. Most importantly, our research method is effective not only for the discrete-time Altafini model but also for the continuous-time Altafini model as discussed in \cite{Altafini:13} and the opinion dynamics model with stubborn individuals as presented in \cite{Fri16}.}
\end{remark}
\section{Illustrative examples}\label{main2}
In this section, some numerical examples are given to illustrate our main results.
\begin{example}\label{exam1}\textup{(Random mixture interactions) This example aims to show the validity of the theoretical results in Theorem \ref{the1} and Corollary \ref{cor1}, where the interaction strengths $S_{ij}$ are drawn from a normal distribution with mean $\mathbb{E}(Z)=0$ and variance $\sigma=1$.}
\par
\textup{(1). Let the parameter values $n$ be 500 and $d$ be 5, respectively. Then, by Theorem \ref{the1}, we estimate the eigenvalue distribution of matrix $W$ in (\ref{equ001}). As shown in FIG. \ref{pic-random}, results with different parameter values $P$ from numerical simulations align well with the theoretical estimations.}
\begin{figure}[ht]
    \centering
    \includegraphics[scale=0.25]{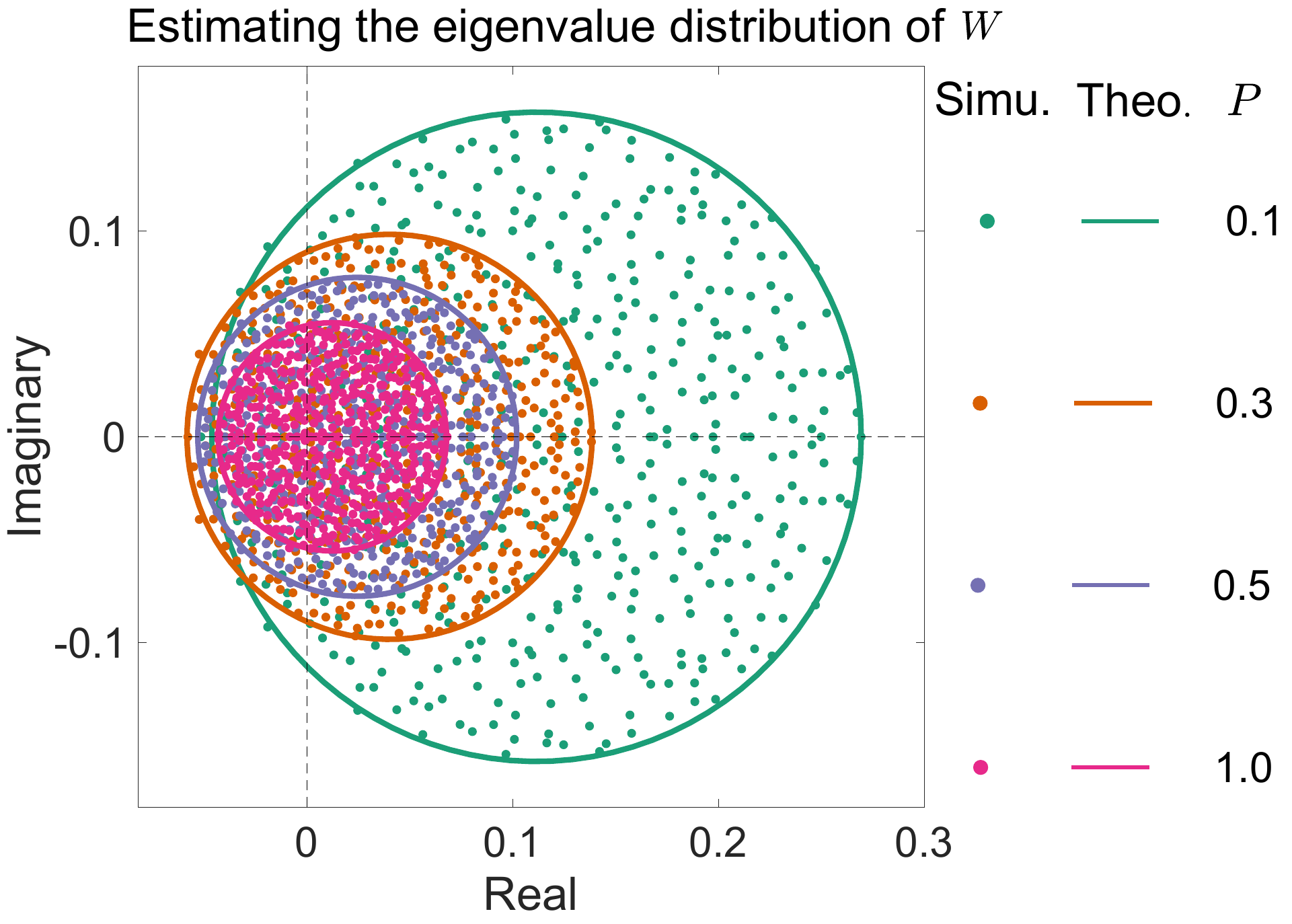}
    \caption{Estimating the eigenvalue distribution of $W$. Solid line and dot are obtained from our theory (Theo.) results and numerical simulations (Simu.), respectively.}
    \label{pic-random}
    \end{figure}
 \par
\textup{(2). Next, we verify the effects of population size, network connectivity, and individuals' self-confidence level on the convergence rate of the system (\ref{equ001}), where the parameter values $n$, $P$, and $d$ are set in TABLE \ref{table1}. From Fig. \ref{fig8b}, the results from numerical simulations are in good agreement with our theoretical analysis.}
\begin{table}[ht]
\centering
    \fontsize{7}{10}\selectfont    
    \caption{Parameter values in Example \ref{exam1}}
\begin{tabular}{c|c|c|c}
    \hline
      & Fig. \ref{fig8b} $(a)$ & Fig. \ref{fig8b} $(b)$ & Fig. \ref{fig8b} $(c)$  \\
    \hline
     Population size $n$& 100:100:1500 & 500 & 500 \\
    \hline
   Network connectivity $P$ & 0.5  & 0.1:0.05:0.9 & 0.5 \\
    \hline
   Self-confidence level $d$ & 5  & 5 & 10:10:100 \\
    \hline
\end{tabular}
\label{table1}
\end{table}
   \begin{figure*}[ht]
    \includegraphics[scale=0.4]{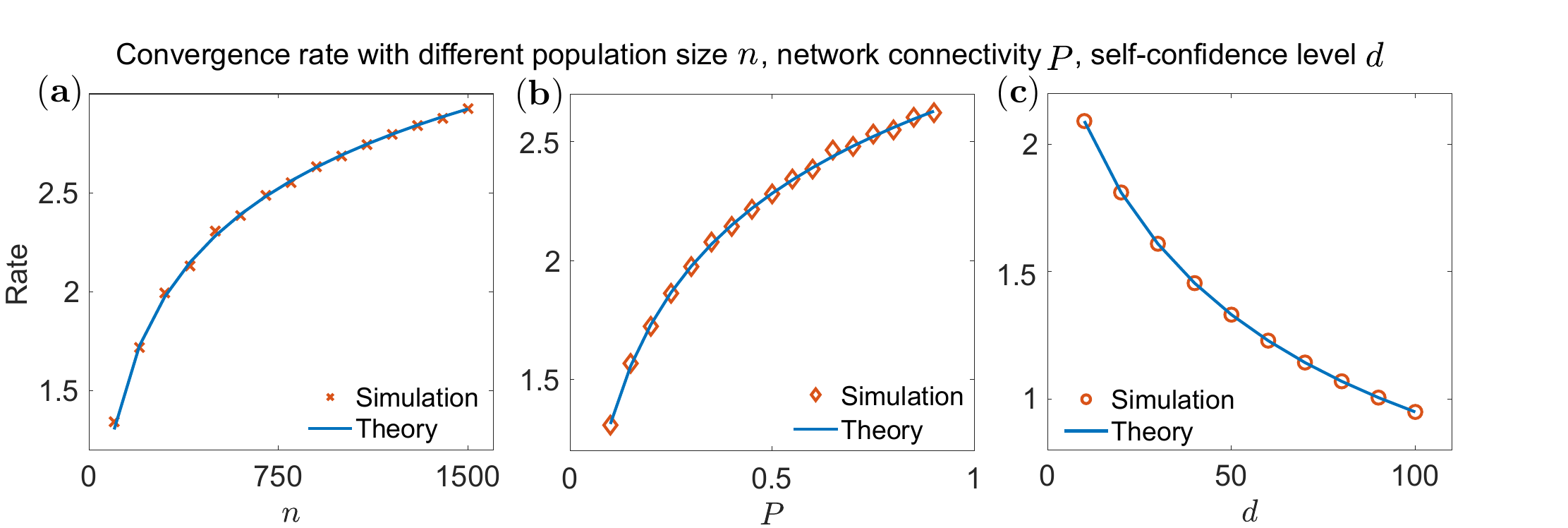}
    \centering
    \caption{Convergence rate of system (\ref{equ001}) with random mixture interactions.}
    \label{fig8b}
  \end{figure*}
\end{example}
\begin{example}\label{exam2}\textup{(Complex mixture interactions)
This example aims to demonstrate the validity of the results on the convergence rate presented in Theorems \ref{the2} and \ref{the3}. The interaction strengths $S_{ij}$ are still drawn from a normal distribution with mean $\mathbb{E}(Z)=0$ and variance $\sigma=1$.}
\par
\textup{(1). Set the parameter values to $n=500$, $d=5$, and $P=0.5$. It is easy to verify that Assumption \ref{ass2} holds. Next, by Theorems \ref{the2}, we estimate the eigenvalue distribution of matrix $W$ in (\ref{equ001}). As shown in Fig. \ref{pic-4}, we find that the results from numerical simulations closely match the theoretical estimations.}
\begin{figure*}[ht]
    \centering
    \includegraphics[scale=0.4]{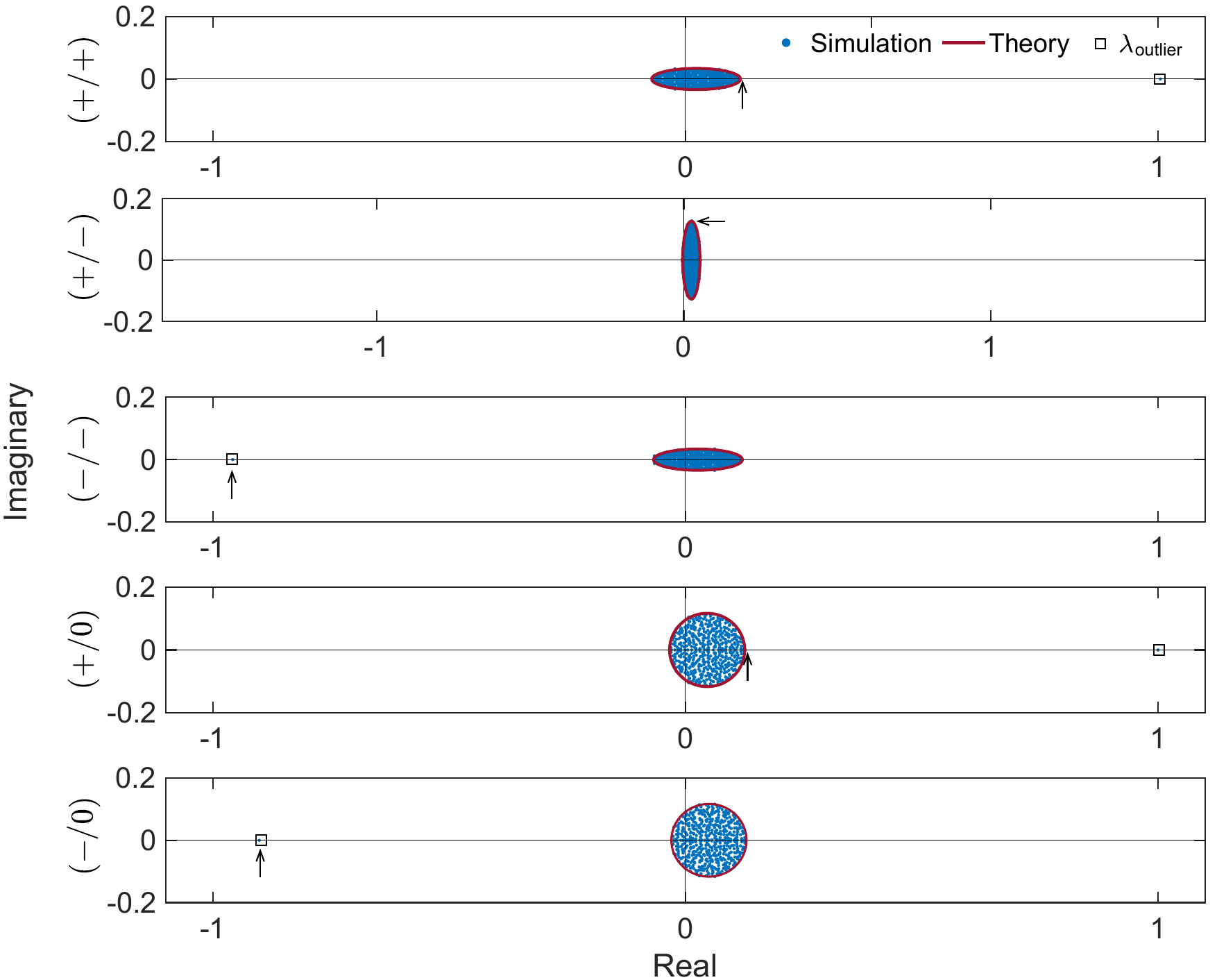}
    \caption{The eigenvalue distribution of $W$ under five interaction scenarios: mutual trust $(+/+)$, mutual mistrust $(-/-)$, trust$/$mistrust $(+/-)$, unilateral trust $(+/0)$, and unilateral mistrust $(-/0)$. The arrow points towards the eigenvalue $\lambda_{i}(W)$ that governs the convergence rate of the system (\ref{equ001}). For mutual trust $(+/+)$ and unilateral trust $(+/0)$, $\lambda_{\text{outlier}}=1$. For mutual mistrust $(-/-)$ and unilateral mistrust $(-/0)$, $\lambda_{\text{outlier}}=1-2W_{ii}$. In the case of trust$/$mistrust $(+/-)$, all eigenvalues are distributed in an ellipse.}
  \label{pic-4}
  \end{figure*}
  \par
\textup{ (2). Now, we verify the effects of population size, network connectivity, self-confidence level, and complex interaction types on the convergence rate of the system (\ref{equ001}), where the parameter values $n$, $P$, and $d$ are provided in TABLE \ref{table2}. As depicted in FIG. \ref{fignpd}, for the $(+/+)$, $(+/-)$, and $(+/0)$ scenarios, an increase in population size and network connectivity results in faster convergence, whereas a higher self-confidence level slows the rate. Conversely, for the $(-/-)$ and $(-/0)$ scenarios, a higher self-confidence level accelerates convergence. Hence, these numerical simulation results are in accordance with our theoretical predictions.}
 \begin{table}[ht]
\centering
    \fontsize{7}{10}\selectfont    
    \caption{Parameter values in Example \ref{exam2}}
\begin{tabular}{c|c|c|c}
    \hline
      & Fig. \ref{fignpd} $(a)$ & Fig. \ref{fignpd} $(b)$ & Fig. \ref{fignpd} $(c)$  \\
    \hline
     Population size $n$& 50:50:1500 & 500 & 500 \\
    \hline
   Network connectivity $P$ & 0.5  & 0.05:0.05:1 & 0.5 \\
    \hline
   Self-confidence level $d$ & 5  & 5 & 3:4:47 \\
    \hline
\end{tabular}
\label{table2}
\end{table}
 \begin{figure*}[ht]
    \centering
    \includegraphics[scale=0.33]{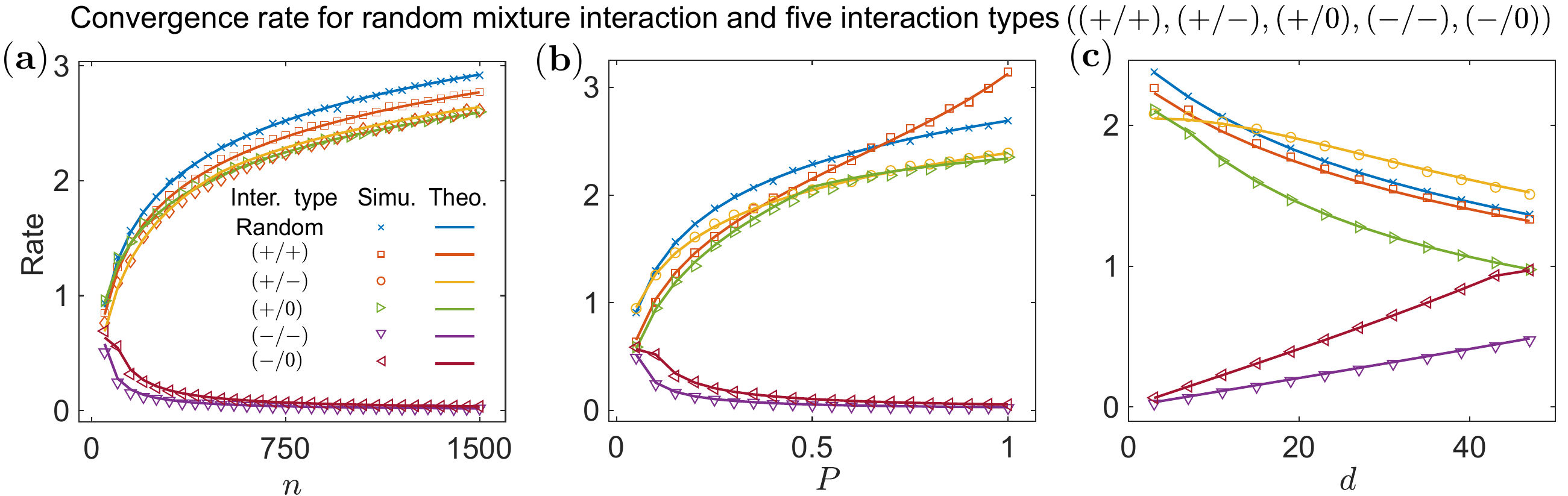}
    \caption{Convergence rate of system (\ref{equ001}) under six interaction scenarios: random mixture, mutual trust $(+/+)$, trust$/$mistrust $(+/-)$, unilateral trust $(+/0)$, mutual mistrust $(-/-)$, and unilateral mistrust $(-/0)$.}
\label{fignpd}
  \end{figure*}
\end{example}
\begin{example}\textup{$($Two mixture interaction scenarios: $(+/+,-/-)$ and $(-/-,-/0))$
In this example, we focus on examining the optimal proportional configuration to ensure the fastest convergence rate for the interactions scenarios discussed in Theorems \ref{the4} and \ref{the5}. Let the parameter values be $n=500$, $d=5$, and $P=0.5$.}

 \textup{As depicted in FIG. \ref{pic-7}, for the $(+/+,$ $-/-)$, $(+/+,+/-)$, $(+/+,-/0)$ scenarios, the convergence rate is not monotonic as the proportion of the $(+/+)$ interaction type increases. Specifically, for the scenario of interaction type $(+/+,-/-)$, when the proportion of $(+/+)$ is approximately 0.5 $(P_{+/+}\approx0.5)$, system (\ref{equ001}) achieves convergence in the fastest speed. From FIG. \ref{pic-7} $(b)$, for the $(-/-,-/0)$ scenario, the convergence rate is inversely proportional to the proportion of $(-/-)$ interaction types.}
\begin{figure*}[ht]
\centering
\includegraphics[scale=0.4]{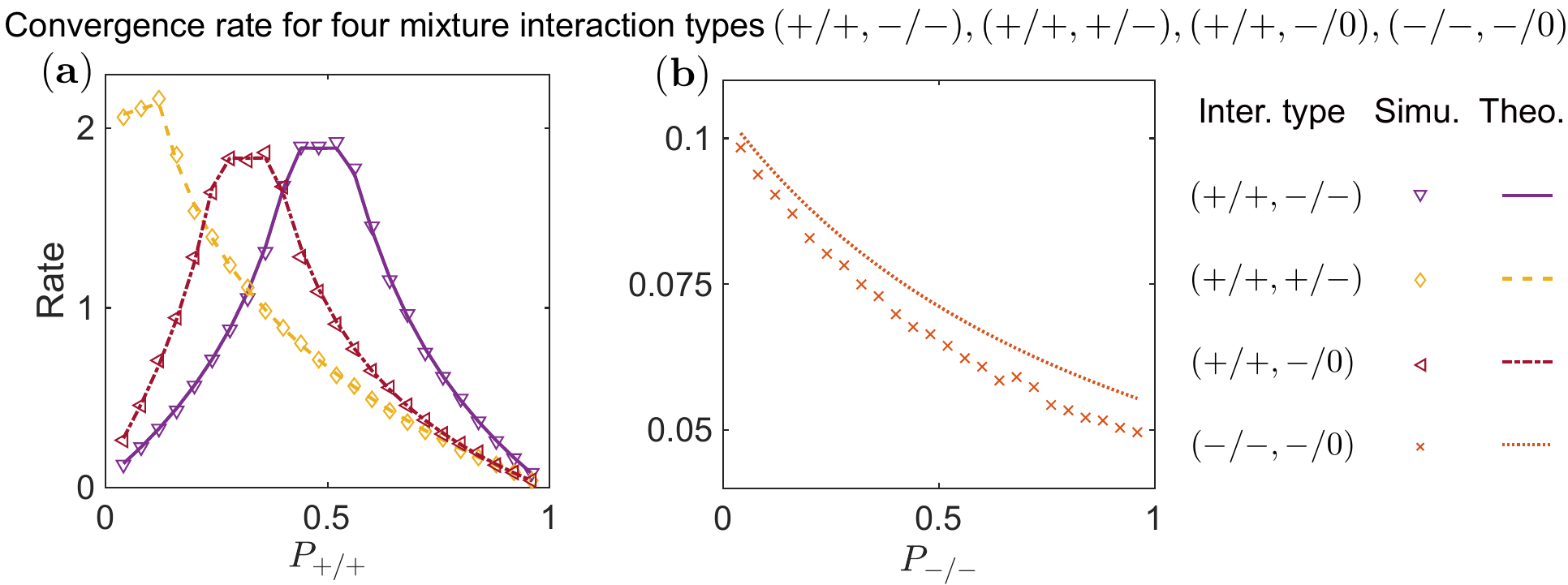}
\caption{Convergence rate of system (\ref{equ001}) under four mixture interaction scenarios: $(+/+,-/-)$, $(+/+,+/-)$, $(+/+,-/0)$, and $(-/-,-/0)$.}
\label{pic-7}
\end{figure*}
\end{example}
\section{Conclusion}\label{con1}
A general framework for the analysis of the convergence rate is established in this paper. Firstly, we have established the quantitative expressions of convergence rate by random matrix theory and low-rank perturbation theory. These results bridge the gap between the convergence rate and complex interaction types. With the aid of this bridge, we have further analyzed the impact of some key factors on the convergence rate through rigorous theoretical derivations. In addition to theoretical analyses, we have also provided simulation examples to corroborate our findings, thereby demonstrating the significant impact of interaction types on the convergence rate.

In a realistic social network, the information transmitted among the individuals may be subject to communication constraints such as delays from time to time, ranging from engineering science (distributed control \cite{he1,he2}) to ecosystems (ecological stability \cite{yang}), and social sciences (opinion forming \cite{zuo}). From the perspective of opinion dynamics, a communication delay between a pair of individuals represents that one individual can only access an earlier opinion of the other, which leads to the fact that individuals cannot express their opinions in a precise manner. This naturally raises a significant question: what impact does time delay have on the convergence rate of opinion dynamics? Whether our approach in this paper can be extended to such a situation remains open for future investigations.

\section*{References}

\end{document}